\documentclass{amsart} 

\usepackage{amsmath,amssymb,amsthm,amsfonts,amsxtra,mathtools}
\usepackage{enumerate,verbatim,mathrsfs,comment,color,url}
\usepackage[all,2cell,ps]{xy}
\usepackage[pagebackref]{hyperref}
\usepackage{graphicx}
\usepackage{tikz-cd}
\usepackage{cleveref}
\usepackage{verbatim}
\usepackage{comment}

\newcommand{\mycomment}[1]{}

\DeclareMathOperator{\ann}{ann}

\DeclareMathOperator{\cid}{CI-dim}
\DeclareMathOperator{\cidim}{CI-dim}

\DeclareMathOperator{\codim}{codim}

\DeclareMathOperator{\coker}{Coker}
\DeclareMathOperator{\cone}{Cone}
\DeclareMathOperator{\curv}{curv}
\DeclareMathOperator{\cx}{cx}
\DeclareMathOperator{\depth}{depth}

\DeclareMathOperator{\Ext}{Ext}

\DeclareMathOperator{\grade}{grade}

\DeclareMathOperator{\Hom}{Hom}
\DeclareMathOperator{\htt}{ht}
\DeclareMathOperator{\id}{id}
\DeclareMathOperator{\Image}{Image}

\DeclareMathOperator{\injcx}{inj\,cx}
\DeclareMathOperator{\injcurv}{inj\,curv}

\DeclareMathOperator{\Ker}{Ker}

\DeclareMathOperator{\pd}{pd}

\DeclareMathOperator{\px}{px}

\DeclareMathOperator{\rank}{rank}

\DeclareMathOperator{\Tor}{Tor}

\DeclareMathOperator{\Tr}{Tr}
\DeclareMathOperator{\type}{type}

\renewcommand{\ge}{\geqslant}
\renewcommand{\le}{\leqslant}

\newcommand{\fm}{\mathfrak{m}}

\newcommand{\fa}{\mathfrak{a}}
\renewcommand{\iff}{if and only if }
\newcommand{\lra}{\longrightarrow}

\theoremstyle{plain}
\newtheorem{theorem}{Theorem}[section]
\newtheorem{lemma}[theorem]{Lemma}
\newtheorem{proposition}[theorem]{Proposition}
\newtheorem{corollary}[theorem]{Corollary}
\newtheorem{remark}[theorem]{Remark}

\theoremstyle{definition}
\newtheorem{definition}[theorem]{Definition}

\newtheorem{example}[theorem]{Example}

\newtheorem{para}[theorem]{}
\newtheorem{question}[theorem]{Question}
\newtheorem{setup}[theorem]{Setup}

\theoremstyle{remark}

\numberwithin{equation}{section}

\title[Homological invariants of modules under linkage]{Complexity, curvature and homological dimension of modules under linkage}

\author[S.~Bhowmick]{Subhadip Bhowmick}
\address{Department of Mathematics, Indian Institute of Technology Kharagpur, West Bengal - 721302, India}
\email{sbhowmick712@kgpian.iitkgp.ac.in, subhadipbhowmick712@gmail.com}

\author[D.~Ghosh]{Dipankar Ghosh}
\address{Department of Mathematics, Indian Institute of Technology Kharagpur, West Bengal - 721302, India}
\email{dipankar@maths.iitkgp.ac.in, dipug23@gmail.com}
\urladdr{\url{https://orcid.org/0000-0002-3773-4003}}

\date{\today}
\subjclass[2020]{Primary 13C40, 13D02, 13D05}
\keywords{Linkage of ideals and modules; Complexity; Curvature; Complete intersection dimension; Ulrich and Burch modules}

\begin{document}

\pagenumbering{arabic}
\thispagestyle{empty}
\begin{abstract}
In this article, we analyze how (projective and injective) complexity, curvature, and complete intersection dimension behave under linkage of modules and ideals. Let $R$ be a Gorenstein local ring. Consider a Gorenstein perfect ideal $\fa$ (e.g., $\fa$ is generated by an $R$-regular sequence). Let $M$ and $N$ be two Cohen-Macaulay $R$-modules linked by $\fa$. We prove that $\cx_R(M) = \injcx_R(N)$ and $\curv_R(M) = \injcurv_R(N)$. In particular, when $R$ is complete intersection, $\cx_R(M) = \cx_R(N)$ and $\curv_R(M) = \curv_R(N)$. Furthermore, we show that $\pd_R(M)=\pd_R(N)$ and $\cid_R(M)=\cid_R(N)$. If any of these dimensions is finite, it is equal to $\htt(\fa)$. Similar results are obtained for linkage of ideals. All these results highly extend a classical result of Peskine and Szpiro in many directions. We construct several examples that complement our results. These also show how properties like `integrally closed', `$\fm$-full' and `Burch' behave under linkage of ideals.
\end{abstract}
\maketitle

\section{Introduction}
In order to investigate algebraic curves in $\mathbb{P}^3$, Noether, Halphen, and Severi developed the theory of linkage (or liaison) of algebraic varieties in the late 19th and early 20th centuries. Linkage makes it possible to shift from one curve to another that is geometrically connected. By repeated application of this process, a sequence of interconnected curves is formed, known as a linkage class. Linkage plays an important role as it preserves specific characteristics of the curve, which can make the resultant curves easier to handle than the original. In 1974, Peskine and Szpiro reduced the general concept of linkage to algebraic considerations involving specific ideals in regular local rings in their paper \cite{PS74}, which established an excellent algebraic foundation for linkage theory. Some authors, such as Martin \cite{Mar00}, Yoshino and Isogawa \cite{YI00}, Martsinkovsky and Strooker \cite{MS04}, and Nagel \cite{Nag05}, applied the theory of linkage to module configuration in various ways. Several research works on linkage theory in the context of modules have been executed applying these generalizations; see, for example, \cite{DS13, DS15, IT16}.


Let $R$ be a commutative Noetherian local ring. Two ideals $I$ and $J$ of $R$ are said to be linked by an ideal $\fa$ if $\fa\subseteq I\cap J$, $I= (\fa:_R J) $ and $J= (\fa:_R I)$. Some of the main results by Peskine-Szpiro in the theory of linkage of ideals are the following.

\begin{theorem}[Peskine-Szpiro \cite{PS74}]\label{thm:Peskine-Szpiro}
    Let $R$ be a Gorenstein local ring. Let $I$ and $J$ be two ideals of $R$ linked by an $R$-regular sequence. Then
    \begin{enumerate}[\rm (1)]
        \item $R/I$ is CM $($Cohen-Macaulay$)$ \iff $R/J$ is CM.
        \item  If $R/I~($equivalently, $R/J)$ is CM, then
        \[
            \pd_R(R/I) < \infty \; \Longleftrightarrow \; \pd_R(R/J) < \infty.
        \]
    \end{enumerate}
\end{theorem}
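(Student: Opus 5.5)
The plan is to pass to the Gorenstein quotient $S=R/\fa$, where $\fa=(x_1,\dots,x_g)$ is generated by the $R$-regular sequence, and use duality over $S$. Since $R$ is Gorenstein and $\fa$ is generated by a regular sequence, $S$ is Gorenstein with $\dim S=\dim R-g$. Write $\bar I=I/\fa$ and $\bar J=J/\fa$. Then the linkage relations say $\bar J=(0:_S\bar I)$ and $\bar I=(0:_S\bar J)$, so $\bar J\cong \Hom_S(S/\bar I,S)$. Note that $R/I=S/\bar I$ and $R/J=S/\bar J$. We also have $\htt I=\htt J=g$. Indeed, $\fa\subseteq I$ gives $\htt I\ge g$. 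Conversely, if $\htt I>g$, then $\bar I$ would have positive grade in the Cohen--Macaulay ring $S$, forcing $0:_S\bar I=0$. That gives $J=\fa$ and then $I=\fa:\fa=R$, which is absurd. So $\dim R/I=\dim R/J=\dim S$.

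For (1), assume $R/I$ is CM. Then $S/\bar I$ is a maximal CM $S$-module, since its dimension equals $\dim S$. Its $S$-dual $\bar J=\Hom_S(S/\bar I,S)$ is therefore also maximal CM over the Gorenstein ring $S$, because $\Ext^{i}_S(S/\bar I,S)=0$ for $i>0$ and the dual of an MCM module over a Gorenstein ring is MCM. Now apply $\Hom_S(-,S)$ to the exact sequence $0\to\bar I\to S\to S/\bar I\to 0$. Using $\Ext^1_S(S/\bar I,S)=0$, this gives
\[
0\to \bar J\to S\to \Hom_S(\bar I,S)\to 0,
\]
together with $\Ext^i_S(\bar I,S)=0$ for $i\ge1$.

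Next, $\bar I$ is MCM, being a syzygy of an MCM module. Hence $\bar I$ is reflexive, and $\Hom_S(\bar I,S)$ is MCM. From the displayed sequence, $S/\bar J$ embeds into the MCM module $\Hom_S(\bar I,S)$, and in fact $S/\bar J\cong\Hom_S(\bar I,S)$. So $R/J=S/\bar J$ is MCM over $S$, hence CM. Since the linkage relation is symmetric, the converse follows as well.

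For (2), I use the Auslander--Buchsbaum formula and pass between $R$ and $S$. If $R/I$ is CM, then $\depth R/I=\dim R-g$. So $\pd_R R/I<\infty$ forces $\pd_R R/I=g$, meaning $R/I$ is perfect of grade $g$.

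The key tool is the change-of-rings duality for CM modules of codimension $g$:
\[
\Ext^g_R(R/I,R)\cong\Hom_S(R/I,\Ext^g_R(S,R))\cong\Hom_S(S/\bar I,S)=\bar J,
\]
using $\Ext^{<g}_R(R/I,R)=0$ and $\Ext^g_R(S,R)\cong S$. If $R/I$ is perfect with minimal resolution $F$ of length $g$, then $F^*$ resolves $\Ext^g_R(R/I,R)\cong\bar J=J/\fa$, so $\pd_R(J/\fa)=g$ is finite. Then the exact sequences $0\to J/\fa\to S\to R/J\to0$, together with $\pd_R S=g<\infty$, give $\pd_R R/J<\infty$. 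Symmetry gives the reverse implication.

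The main obstacle is justifying the isomorphism $\Ext^g_R(R/I,R)\cong \Hom_S(R/I,S)$. I would do this by the standard Rees lemma: since $x_1,\dots,x_g$ annihilate $R/I$ and form an $R$-regular sequence, $\Ext^g_R(R/I,R)\cong\Hom_{R/\fa}(R/I,R/\fa)$. Once that is available, everything else is routine depth and dimension bookkeeping.
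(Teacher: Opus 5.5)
Your proposal is correct, provided $I$ and $J$ are proper. The pair $I=R$, $J=\fa$ satisfies Definition~\ref{defn:linked-ideals}. Your ``absurd'' step and your use of $\bar I\neq 0$ in part (1) both quietly exclude this case, and the statement needs that implicit hypothesis anyway.

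The paper does not reprove Theorem~\ref{thm:Peskine-Szpiro}. Part (1) is imported from \cite[Cor.~15]{MS04}. Part (2) is recovered as the case $M=R/I$, $N=R/J$ of Theorem~\ref{thm:main}.(3) (cf.~Corollary~\ref{cor:main}.(3)), via the mapping cone of $\Phi^*\colon\mathbb{F}^*\to\mathbb{K}^*$ from Setup~\ref{set}.

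Your argument for (2) uses the same ingredients as the paper:
the Rees isomorphism $\Ext^g_R(-,R)\cong\Hom_S(-,S)$ (the paper's appeal to \cite[Lem.~11]{MS04});
the vanishing of $\Ext^i_R(R/I,R)$ for $i\neq g$;
and the sequence $0\to J/\fa\to S\to R/J\to 0$, which is exactly \eqref{e} for $M=R/I$.
You assemble them in two steps: $\mathbb{F}^*$ resolves $J/\fa\cong\Ext^g_R(R/I,R)$, and then finiteness of $\pd_R(S)$ finishes. The paper instead splices $\mathbb{F}^*$ and $\mathbb{K}^*$ into one complex and identifies $N$ with $H^g(\cone\Phi^*)$ (Lemma~\ref{lem}).

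Your route is shorter and more elementary when $\pd_R(R/I)<\infty$. The paper's single cone needs no finiteness assumption. It produces $0\to N\to\coker\psi\to\Image\xi\to 0$ with $\pd_R(\coker\psi)<\infty$ and $\Image\xi$ an MCM module carrying the tail of $\mathbb{F}$. That is what lets complexity, curvature and CI-dimension be transferred, not just finiteness of projective dimension.

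For (1), your duality argument over the Gorenstein ring $S=R/\fa$ shows $R/J\cong\Hom_S(\bar I,S)$ is MCM. It is a self-contained substitute for the paper's citation.
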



The notions of (projective and injective) complexity and curvature of a module were introduced by Avramov in \cite{Avr89a}, \cite{Avr89b} and \cite{Avr96} to distinguish modules of infinite homological dimension. They measure respectively the polynomial and exponential growth rate of Betti and Bass numbers of a module, see Definition~\ref{defn:cx-injcx}. The projective complexity and curvature of an $R$-module $M$ are denoted by $\cx_R(M)$ and $\curv_R(M)$ respectively. In the literature, these are simply called complexity and curvature of $M$. For injective complexity and curvature of $M$, we use the notations $\injcx_R(M)$ and $\injcurv_R(M)$, respectively. Note that any of $\cx_R(M)$ and $\curv_R(M)$ (resp., $\injcx_R(M)$ and $\injcurv_R(M)$) is zero \iff the projective (resp., injective) dimension of $M$ is finite; see Remark~\ref{remark}.

Using the concept of projective dimension, Avramov, Gasharov, and Peeva \cite{AGP97} introduced another homological dimension to characterize complete intersection local rings. This is called complete intersection dimension, in short CI-dimension.

In this paper, we are interested in the linkage of modules due to Martsinkovsky and Strooker \cite{MS04}.
They extended the notion of linkage from ideals to modules by introducing the operator $\lambda_R(-):= \Omega\Tr(-)$, where $\Tr(M)$ and $\Omega(M)$ respectively denote the Auslander transpose and syzygy of an $R$-module $M$, see \ref{para:syz} and \ref{para:Aus-tr}. According to the definition in \cite{MS04}, two $R$-modules $M$ and $N$ are said to be linked by an ideal $\fa$ if $\fa\subseteq \ann_R(M)\cap\ann_R(N)$, $M\cong\lambda_{R/\fa}(N)$ and $N\cong\lambda_{R/\fa}(M)$, see Definition~\ref{defn:linked-modules}. It follows that two proper ideals $I$ and $J$ are linked by $\fa$ as ideals of $R$ if and only if $R/I$ and $R/J$ are linked by $\fa$ as $R$-modules, cf.~Remark~\ref{rmk:linked-ideal-red}.

The main aim of this article is to analyze how projective (resp., injective) complexity and curvature, as well as CI-dimension, behave under linkage of modules (and hence of ideals) by a Gorenstein perfect ideal over a Gorenstein local ring. A particular class of Gorenstein perfect ideals is formed by complete intersection ideals (that is, ideals generated by regular sequences), which provide the classical setting of linkage theory. The behavior of complexity, curvature, and CI-dimension under linkage was not known even in this classical setting. We prove the following results.


\begin{theorem}\label{thm:main}
 Let $R$ be a Gorenstein local ring. Let $M$ and $N$ be two finitely generated $R$-modules linked by a Gorenstein perfect ideal $\fa$ $($e.g., $\fa$ is generated by an $R$-regular sequence$)$; {\rm cf.~\ref{para:Gor-ideal}}. Suppose that $M$ is CM. Then the following hold.
    \begin{enumerate}[\rm (1)]
        \item $\cx_R(M) = \injcx_R(N)$ and $\curv_R(M) = \injcurv_R(N)$.
        \item When $R$ is complete intersection, $\cx_R(M) = \cx_R(N)$ and $\curv_R(M) = \curv_R(N)$.
        \item $\pd_R(M)=\pd_R(N)$. When they are finite, they are equal to $\htt(\fa)$.
        \item $\cid_R(M)=\cid_R(N)$. When they are finite, they are equal to $\htt(\fa)$.
    \end{enumerate}   
\end{theorem}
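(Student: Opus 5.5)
Set $S=R/\fa$ and $g=\htt(\fa)$. Since $R$ is Gorenstein and $\fa$ is Gorenstein perfect, $S$ is a Gorenstein local ring of dimension $d-g$ and $\pd_R(S)=g$. The plan is to identify $N$ with a canonical dual of $M$ over $S$, and then to move Betti and Bass numbers back and forth between $S$ and $R$.

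\textbf{Step 1: describe $N$ as a dual of $M$.} First, $M$ is a maximal CM $S$-module. Indeed, $M$ is CM, $\dim M\le \dim S$, and linkage forces $M$ to be stable and horizontally linked over $S$. By Martsinkovsky--Strooker, a module horizontally linked over a Gorenstein ring is maximal CM exactly when its link is, and in that case
\[
N\cong \lambda_S(M)\cong \Omega_S\Tr_S(M)\cong \Hom_S(M,S)=M^{*}.
\]
Since $S$ is Gorenstein, this gives $N\cong \Hom_S(M,\omega_S)$. In particular $N$ is also maximal CM over $S$ with $\dim N=d-g$.

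\textbf{Step 2: change of rings for the Betti and Bass numbers.} Let $\mu^i_R(N)$ denote the Bass numbers. By Ext-adjunction and local duality over $R$,
\[
\Ext^i_R(k,N)\cong \Ext^i_R(k,\Hom_S(M,\omega_S)).
\]
We know $\omega_S\cong\Ext^g_R(S,R)$, and $\Ext^{j}_R(S,R)=0$ for $j\ne g$. Hence $\mathbf{R}\Hom_R(S,R)\simeq S[-g]$, which gives
\[
\mathbf{R}\Hom_S(M,\omega_S)\simeq \mathbf{R}\Hom_R(M,R)[g].
\]
Since $M$ is maximal CM over $S$, the left side is concentrated in degree $0$ and equals $N$. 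So $N\simeq \mathbf{R}\Hom_R(M,R)[g]$, i.e. $N\cong\Ext^g_R(M,R)$, and $\Ext^i_R(M,R)=0$ for $i\neq g$. This is the usual dual of a CM module of codimension $g$ over a Gorenstein ring.

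Now take $F\to M$ to be a minimal $R$-free resolution. Then $\Hom_R(F,R)$ is a complex of free modules with only one homology module, namely $N$ in degree $g$. Put $E=\mathbf{R}\Hom_R(k,R)\simeq k[-d]$. The Bass numbers of $N$ are computed by $\mathbf{R}\Hom_R(k,N)\simeq \mathbf{R}\Hom_R(k\otimes^{\mathbf L}_R M,R)[g]$. Using $\Hom_R(k\otimes^{\mathbf{L}} M,R)$ together with local duality for $R$ Gorenstein, we get
\[
\mu^{i}_R(N)=\beta^R_{i-d+g}(M)
\]
(up to the exact shift). Consequently $\cx_R(M)=\injcx_R(N)$ and $\curv_R(M)=\injcurv_R(N)$, which is (1).

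\textbf{The remaining parts.}

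For (2): over a complete intersection, $\injcx=\cx$ and $\injcurv=\curv$. This holds because, over a Gorenstein ring, Bass numbers of a CM module agree with Betti numbers of its dual up to shift. The same argument as in Step 2, applied to $N$ and its own link $M$, gives $\cx_R(N)=\injcx_R(M)$. Over a complete intersection, Avramov--Buchweitz symmetry $\cx_R(M)=\injcx_R(M)$ (and likewise for curvature) then closes the chain.

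For (3): the identity $N\simeq\mathbf{R}\Hom_R(M,R)[g]$ shows that $\pd_R M<\infty$ iff $\pd_R N<\infty$. In that case Auslander--Buchsbaum gives
\[
\pd_R M=d-\depth M=d-(d-g)=g,
\]
and the same for $N$.

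For (4): CI-dimension is also preserved under $\mathbf{R}\Hom_R(-,R)$ for CM modules over a Gorenstein ring. The route is to pass to a quasi-deformation $R\to R'\leftarrow Q$, where $R'$ is still Gorenstein and the flat base change preserves the linkage data. Over $Q$, the projective dimension of the dual is finite iff that of $M$ is, by the same Ext-vanishing argument over the Gorenstein ring $Q$. The Auslander--Buchsbaum type formula for CI-dimension then gives the value $g$.

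\textbf{Main obstacle.} I expect the main obstacle to be Step 1: justifying that linkage by $\fa$ together with $M$ being CM forces $M$ to be maximal CM over $S$, and that $N\cong M^{*}$. I would use the depth formula for linked modules (via $\lambda$ and $\Ext^1_S(\Tr_S M,S)$) to get this.
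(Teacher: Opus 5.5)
Your Step~1 contains a false identification, and as a result everything after it is computed for the wrong module. $\lambda_S(M)=\Omega_S\Tr_S(M)$ is not $\Hom_S(M,S)$. Dualizing the minimal presentation $\overline{F_1}\to\overline{F_0}\to M\to 0$ gives $0\to M^*\to\overline{F_0}^{\,*}\to\overline{F_1}^{\,*}$, and $\lambda_S(M)$ is the image of the last map. So the correct statement is the exact sequence
\[
0\longrightarrow \Hom_S(M,S)\longrightarrow \Hom_S(\overline{F_0},S)\longrightarrow N\longrightarrow 0,
\]
i.e.\ $M^*\cong\Omega_S(N)$, not $M^*\cong N$.

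For ideals this is easy to see: if $M=R/I$ and $N=R/J$, then $\Hom_S(R/I,S)\cong(\fa:_RI)/\fa=J/\fa$, not $R/J$. Concretely, take $R=k[[x,y]]$, $\fa=(xy)$, $M=R/(x)$, $N=R/(y)$. Your claim $N\cong\Ext^1_R(M,R)$ would then say $R/(y)\cong R/(x)$.

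Your derived computation is itself correct, but what it proves is $\mu^i_R(M^\vee)=\beta^R_{i-d+g}(M)$ for $M^\vee:=\Ext^g_R(M,R)\cong\Hom_S(M,S)$. It says nothing yet about $N$, and your arguments for (3) and (4) rest on the same identification. A telling symptom is that your argument never uses that $\fa$ is perfect. The facts $\Ext^j_R(S,R)=0$ for $j\neq g$ and $\Ext^g_R(S,R)\cong S$ only need $S$ Gorenstein of codimension $g$. Yet Example~\ref{exam:Burch-m-full-is-not-preserved} shows that (1) fails for a non-perfect Gorenstein $\fa$: there $\injcx_R(M^\vee)=\cx_R(R/I)=1$ but $\injcx_R(R/J)=2$.

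The missing step is exactly where perfectness enters.
\begin{enumerate}[\rm (1)]
\item The middle term of the displayed sequence is a free $S$-module. It has finite projective dimension over $R$ because $\pd_R(S)=g$, hence finite injective dimension because $R$ is Gorenstein. Remark~\ref{rmk:ses-cx}.(2) then gives $\injcx_R(N)=\injcx_R(M^\vee)$ and $\injcurv_R(N)=\injcurv_R(M^\vee)$, and your Bass-number formula finishes (1).
\item The same sequence, together with $M\cong\Ext^g_R(M^\vee,R)$, gives $\pd_R N<\infty\Longleftrightarrow\pd_R M^\vee<\infty\Longleftrightarrow\pd_R M<\infty$ for (3).
\item For (4), finiteness of CI-dimension passes between $M^\vee$ and $N$ along this sequence using a single quasi-deformation, since $\pd_R(S)<\infty$ forces $\pd_Q(S\otimes_RR')<\infty$. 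The implication $\cid_R M<\infty\Rightarrow\cid_R M^\vee<\infty$ follows by dualizing a finite $Q$-free resolution of $M\otimes_RR'$. This works because $\Ext^*_Q(M\otimes_RR',Q)$ is concentrated in the single degree $g+c$, where $c$ is the codimension of $Q\to R'$. Your unproved assertion that $R'$ can be chosen Gorenstein is not needed.
\end{enumerate}

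Once repaired this way, your route is a genuinely shorter alternative to the paper's. The paper builds $0\to N\to\coker\psi\to\Image\xi\to0$ from the mapping cone of $\Phi^*$, with $\pd_R\coker\psi<\infty$, $\Image\xi$ MCM and $(\Image\xi)^*$ a high syzygy of $M$. It then invokes $\injcx_R(X)=\cx_R(X^*)$ for MCM $X$. Your derived adjunction replaces that duality lemma with a direct computation. But as written, the proposal proves the statements for $\Ext^g_R(M,R)$ rather than for $N$.
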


Theorem~\ref{thm:main}.(1) in particular ensures that both projective and injective complexity (resp., curvature) of CM modules are preserved under even linkage by Gorenstein perfect ideals in a Gorenstein local ring, while Theorem~\ref{thm:main}.(2) shows that all these four invariants of CM modules are preserved under linkage by Gorenstein perfect ideals in a complete intersection local ring.

Theorem~\ref{thm:main}.(3) can be derived from \cite[Prop.~10 and Cor.~15]{MS04}. As it is not specifically mentioned anywhere in \cite{MS04}, we note it here as a consequence of Theorem~\ref{thm:main}.(1) using the fact that over a Gorenstein local ring, a finitely generated module has finite projective dimension \iff it has finite injective dimension. It follows from \cite[p.~608, Thm.~4]{MS04} that Theorem~\ref{thm:main}.(3) is not true in general for modules which are not CM. Thus, the assertions in Theorem~\ref{thm:main} do not hold in general if $M$ is not CM.

It is worth noting separately the version of Theorem~\ref{thm:main} for linkage of ideals. We obtain this as a corollary of Theorem~\ref{thm:main}.

\begin{corollary}\label{cor:main}
      Let $R$ be a Gorenstein local ring. Let $I$ and $J$ be two proper ideals of $R$ linked by a Gorenstein perfect ideal $\fa$ $($e.g., $\fa$ is generated by an $R$-regular sequence$)$. Suppose that $R/I$ is CM. Then the following hold.
     \begin{enumerate}[\rm(1)]
        \item 
        $\cx_R(I) = \injcx_R(J)$ and $\curv_R(I) = \injcurv_R(J)$.
        \item 
        When $R$ is complete intersection, $\cx_R(I) = \cx_R(J)$ and $\curv_R(I) = \curv_R(J)$.
        \item 
        $\pd_R(R/I)=\pd_R(R/J)$. When they are finite, they are equal to $\htt(\fa)$.
        \item 
        $\cid_R(R/I)=\cid_R(R/J)$. When they are finite, they are equal to $\htt(\fa)$.
    \end{enumerate} 
\end{corollary}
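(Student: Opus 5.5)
The plan is to deduce everything from Theorem~\ref{thm:main} applied to $M=R/I$ and $N=R/J$. By Remark~\ref{rmk:linked-ideal-red}, the ideals $I$ and $J$ are linked by $\fa$ exactly when $R/I$ and $R/J$ are linked by $\fa$ as $R$-modules. Since $R/I$ is assumed CM, all hypotheses of Theorem~\ref{thm:main} hold. Parts (3) and (4) then follow at once from Theorem~\ref{thm:main}.(3),(4): these already concern $\pd_R(R/I)$, $\pd_R(R/J)$, $\cid_R(R/I)$ and $\cid_R(R/J)$, with common value $\htt(\fa)$ when finite.

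For (1) and (2) we must pass from the cyclic modules to the ideals themselves, using the short exact sequences $0\to I\to R\to R/I\to 0$ and $0\to J\to R\to R/J\to 0$.

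\emph{Projective side.} The minimal free resolution of $R/I$ begins with $R$, and its truncation is a minimal resolution of $I$. Hence $\beta_{i}^R(I)=\beta_{i+1}^R(R/I)$ for all $i\ge 0$. A shift of index does not change polynomial growth rate or exponential growth rate (the $\limsup$ of $\sqrt[n]{\beta_n}$). Thus $\cx_R(I)=\cx_R(R/I)$ and $\curv_R(I)=\curv_R(R/I)$. One degenerate case needs a separate check: if $\pd_R(R/I)<\infty$, then both invariants are zero on both sides, by Remark~\ref{remark}.

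\emph{Injective side.} Since $R$ is Gorenstein, $\Ext^i_R(k,R)=0$ for $i\neq \dim R$. Applying $\Hom_R(k,-)$ to $0\to J\to R\to R/J\to 0$ gives $\mu^{i}_R(J)=\mu^{i-1}_R(R/J)$ for all $i>\dim R+1$. The Bass sequences therefore agree up to a shift and finitely many terms, so $\injcx_R(J)=\injcx_R(R/J)$ and $\injcurv_R(J)=\injcurv_R(R/J)$. The finite-injective-dimension case is again consistent, because $\id_R R<\infty$.

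Combining these identifications with Theorem~\ref{thm:main}.(1) gives (1), and with Theorem~\ref{thm:main}.(2) gives (2). The only step needing care is making sure the growth invariants in Definition~\ref{defn:cx-injcx} are insensitive to finitely many terms and to index shifts. For complexity this holds because bounds $\beta_n\le c\,n^{d-1}$ are asymptotic. For curvature it holds because $\limsup\sqrt[n]{\beta_{n+1}}=\limsup\sqrt[n]{\beta_n}$. This is routine, so I expect no real obstacle here.
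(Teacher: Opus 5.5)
Your proposal is correct and follows essentially the paper's own proof. You apply Theorem~\ref{thm:main} to $M=R/I$ and $N=R/J$ via Remark~\ref{rmk:linked-ideal-red}, identify the complexity and curvature of $I$ with those of $R/I$ through the syzygy shift, and identify the injective invariants of $J$ and $R/J$ using $0\to J\to R\to R/J\to 0$ with $\id_R R<\infty$. The last step is exactly Remark~\ref{rmk:ses-cx}.(2), which the paper cites and your Bass-number computation spells out; the same syzygy shift applied to $J$ supplies the projective identification needed in (2).
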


In view of Remark~\ref{remark}, Corollary~\ref{cor:main}.(1) highly strengthens Theorem~\ref{thm:Peskine-Szpiro}.(2), while Corollary~\ref{cor:main}.(4) provides the counterpart of Theorem~\ref{thm:Peskine-Szpiro}.(2) for CI-dimension.

There has been a surge of interest to find classes of modules having maximum possible complexity or curvature. See \ref{para:max-cx-curv} and \ref{para:max-cx-curv-survey} for the terminology and a detailed survey on this. As a consequence of Theorem~\ref{thm:main}.(1), one gets that if an $R$-module $M$ is Ulrich (Definition~\ref{defn:Ulrich-Burch}.\eqref{defn:Ulrich}), or if $M$ is Burch (Definition~\ref{defn:Ulrich-Burch}.\eqref{defn:Burch}), then with the hypotheses as in Theorem~\ref{thm:main}, the module $N$ has maximal projective (resp., injective) complexity and curvature; see Corollary~\ref{cor:Burch} for more results in this direction. Furthermore, in a Gorenstein local ring $(R,\fm)$, an $\fm$-primary ideal, which is linked to an integrally closed ideal or a Burch ideal, also has maximal projective (resp., injective) complexity and curvature; cf.~Corollary~\ref{cor:Burch-ideal}. For reader's information, we note some classes of Burch modules and ideals in Example~\ref{exam:Burch}.

We now describe the contents of the article. In Section~\ref{sec:prel}, we provide some notations, definitions and preliminary results that are used in the article. In Section~\ref{sec:main-results}, we prepare some technical lemmas, which lead to the proofs of Theorem~\ref{thm:main} and Corollary~\ref{cor:main}. Finally, in Section~\ref{sec:applications}, we show some applications of Theorem~\ref{thm:main}, which are Corollaries~\ref{cor:Burch} and \ref{cor:Burch-ideal}. In the same section, we construct several examples that complement our results. We observe that the Ulrich property of modules is preserved under linkage by a Gorenstein ideal, and it is not true in general; see Proposition~\ref{prop:Ulrich-linkage} and Example~\ref{exam:Ulrich-is-not-preserved}. The assertions (1) and (2) in Theorem~\ref{thm:main} do not hold in general if we omit the condition that $\fa$ is perfect, see Example~\ref{exam:Burch-m-full-is-not-preserved}. The properties `integrally closed' and `$\fm$-full' are not always preserved under linkage of ideals by a regular sequence even in a regular local ring as shown in Example~\ref{exam:int-closed-is-not-preserved}.


\section{Preliminaries}\label{sec:prel}

 In this section, we recall some terminologies, definitions, and basic properties, which are used later in the article. {\it Throughout, $R$ is a commutative Noethenian local ring with the maximal ideal $\fm$ and the residue field $k$. All modules over $R$ are assumed to be finitely generated.}
 Let $M$ be an $R$-module. Set $ M^* := \Hom_R(M,R) $. The multiplicity of $M$ is denoted by $e(M)$, while $\mu(M)$ denotes the minimal number of generators of $M$. The codimension of $M$ is defined to be $\codim(M) := \dim(R) - \dim(M)$, while codimension of the ring $R$ is given by $\mu(\fm)-\dim(R)$. Let $\beta_n^R (M) := \rank_k(\Ext^n_R(M,k))$ denote the $n$-th Betti number of $M$, while the $n$-th Bass number of $M$ is denoted by $\mu_R^n(M) := \rank_k(\Ext^n_R(k,M))$. Set $\type(M):=\mu_R^t(M)$, where $t=\depth(M)$. The annihilator of $M$ is denoted by $\ann_R(M):=\{ r \in R : rM=0 \}$. The abbreviations CM and MCM are used for Cohen-Macaulay and maximal Cohen-Macaulay respectively.
 
 \begin{para}\label{para:syz}
      For $n\ge 0$, let $\Omega^R_n(M)$ denote the $n$-th syzygy module of $M$ in a minimal free resolution of $M$, i.e., if $\dots \rightarrow F_1\xlongrightarrow{d_1}F_0\xlongrightarrow{d_0}M \xlongrightarrow{d_{-1}} 0$ is a minimal free resolution of $M$ (along with the augmentation), then $\Omega^R_n(M) := \coker d_{n+1}=\Ker d_{n-1}$. Sometimes, we write $\Omega(M)$ to denote $\Omega^R_1(M)$. Note that in a minimal free resolution, for each $n\ge 0$, $\Omega^R_n(M)$ is unique upto isomorphism.
 \end{para}
 
 \begin{para}\label{para:Aus-tr}\cite[Def.~2.5]{AB69}
     Let $F_1\xlongrightarrow{d_1}F_0\xlongrightarrow{d_0}M \rightarrow 0$ be a free presentation of $M$. The (Auslander) transpose of $M$ is defined to be $\Tr(M) := \coker({d_1^*})$. It induces an exact sequence \begin{equation*}
          0\rightarrow M^* \xlongrightarrow{d_0^*}{F_0^*}\xlongrightarrow{d_1^*}{F_1^*}\rightarrow \Tr(M) \rightarrow 0.
       \end{equation*}
       If the free presentation of $M$ is minimal, then $\Tr(M)$ is unique up to isomorphism, and ${F_0^*}\xlongrightarrow{{d_1^*}}{F_1^*}\rightarrow\Tr(M) \rightarrow 0$ is a minimal free presentation of $\Tr(M)$. Set
       $$\lambda(M):= \Omega\Tr(M).$$
 \end{para}

 \begin{para}\label{para:Gor-ideal}
     An ideal $\fa$ of $R$ is said to be Gorenstein if $R/\fa$ is a Gorenstein ring. Note that if $\fa$ is a Gorenstein ideal of $R$, then $R/\fa$ is CM as an $R$-module. In addition, if $\pd_R{(\fa)}$ is finite, then $\fa$ is a perfect ideal, i.e., $R/\fa$ is a perfect $R$-module.
 \end{para}
 \begin{definition}\label{defn:linked-ideals}\cite[Sec.~2]{PS74}
     Two ideals $I$ and $J$ of $R$ are said to be linked by an ideal $\fa$ if $\fa\subseteq I\cap J$, $I= (\fa:_R J) $ and $J= (\fa:_R I)$.
 \end{definition}

 Martsinkovsky and Strooker extended the notion of linkage from ideals to modules as follows.
 
 \begin{definition}\label{defn:linked-modules}
 \begin{enumerate}[\rm (1)]
     \item
     \cite[Def.~3]{MS04} Two $R$-modules $M$ and $N$ are said to be horizontally linked if $M\cong\lambda(N)$ and $N\cong\lambda(M)$. Thus, an $R$-module $M$ is called horizontally linked (to $\lambda(M)$) if $M\cong\lambda^2(M)$, where $\lambda^2=\lambda\circ\lambda $.
     \item 
     \cite[Def.~4]{MS04} The $R$-modules $M$ and $N$ are said to be linked by an ideal $\fa$ if $\fa\subseteq \ann_R(M)\cap\ann_R(N)$, and $M, N$ are horizontally linked as $R/\fa$-modules.
 \end{enumerate}
 \end{definition}
 
 \begin{remark}\label{rmk:linked-ideal-red}
     It follows from the definition that $I$ and $J$ are linked by an ideal $\fa\subseteq I \cap J$ \iff the ideals $I/\fa$ and $J/\fa$ are linked by the zero ideal of the ring $R/\fa$. When both $I$ and $J$ are proper ideals, the second statement is equivalent to the fact that $R/I$ and $R/J$ are horizontally linked as $R/\fa$-modules, cf.~\cite[Prop.~1 and Def.~2]{MS04}.
 \end{remark}
 
 
\begin{definition}\label{defn:cx-injcx}
\begin{enumerate}[\rm (1)] 
    \item \cite[Def.~1.1]{Avr89a} 
    For an $R$-module $M$, the complexity (more precisely, projective complexity) $\cx_R(M)$ of $M$ is the least non-negative integer $b$ such that there exists $\alpha>0$ satisfying $\beta_n^R(M) \le \alpha n^{b-1}$ for all $n\gg 0$. If no such $b$ exists, then $\cx_R(M):=\infty$.
   \item  \cite[Def.~5.1]{Avr89b}
   Replacing $\beta_n^R(M)$ by $\mu_R^n(M)$ in (1), one defines the injective complexity $\injcx_R(M)$ of $M$. In \cite{Avr89b}, it is called plexity, and denoted by $\px_R(M)$.
   \item 
   \cite[Sec.~1]{Avr96} 
   The (projective) curvature and the injective curvature of $M$, denoted by $\curv_R(M)$ and $\injcurv_R(M)$ respectively, are defined to be
   \begin{equation*}
       \curv_R(M) := \limsup\limits_{n\to\infty}\sqrt[n]{\beta_n^R(M)} \mbox{ and } \injcurv_R(M) := \limsup\limits_{n\to\infty}\sqrt[n]{\mu_R^n(M)}.
   \end{equation*}
   \end{enumerate}
\end{definition}

\begin{remark}\label{remark}
 Let $M$ be an $R$-module. Then
\begin{enumerate}[\rm (1)]
    \item $\pd_R(M) < \infty \Longleftrightarrow \cx_R(M) = 0 \Longleftrightarrow \curv_R(M) = 0 \Longleftrightarrow \curv_R(M) < 1$.
   \item $\id_R(M) < \infty \Longleftrightarrow \injcx_R(M) = 0 \Longleftrightarrow \injcurv_R(M) = 0 \\
   \hspace*{5.18cm}\Longleftrightarrow
   \injcurv_R(M) < 1$.
   \item $\cx_R(M) < \infty \Longrightarrow \curv_R(M) \le 1 \Longleftrightarrow \curv_R(M) =0 \mbox{ or } 1$. Similar implications hold for injective complexity and curvature.
\end{enumerate}
\end{remark}

\begin{proof}
    (1) and (2) can be found in \cite[Sec.~1]{Avr96} and \cite[Rmk.~2.3]{DGS24}, while (3) follows from (1), (2) and the definition of (injective) complexity and curvature.
\end{proof}

The following remark is straightforward and may already be known to experts, so we omit the proof.

\begin{remark}\label{rmk:ses-cx}
 Let $0\rightarrow M \rightarrow K \rightarrow N \rightarrow 0$ be a short exact sequence of $R$-modules. The following statements hold.
  \begin{enumerate}[\rm (1)]
      \item If $\pd_R(K)< \infty$, then $\cx_R(M)=\cx_R(N)$ and $\curv_R(M)=\curv_R(N)$.
       \item If $\id_R(K)< \infty$, then
       $$\injcx_R(M)=\injcx_R(N) \ \mbox{ and } \ \injcurv_R(M)=\injcurv_R(N).$$
  \end{enumerate}
\end{remark}



The notion of CI-dimension is due to Avramov-Gasharov-Peeva\cite{AGP97}
\begin{para}\label{CI-dim}\cite[(1.1) and (1.2), p.~70]{AGP97}
A (codimension $c$) quasi-deformation of $R$ is a diagram of local ring homomorphisms $ R\rightarrow R' \leftarrow S$, where $R\rightarrow R' $ is flat, and $R' \leftarrow S$ is a (codimension $c$) deformation, i.e., $ R' \leftarrow S $ is surjective with kernel generated by a (length $c$) regular sequence.

Set $\cidim_R(0):=0$. When $M \neq 0$, CI-dimension of $M$ is defined to be
\[
  \cidim_R(M) = \inf\{\pd_S(M \otimes_R R')-\pd_S(R') : R\rightarrow R'\leftarrow S \mbox{ is a quasi-deformation} \}.
\]
\end{para}
\begin{para}\label{rem}
    Let $(\mathbb{F}, d^F)$ and $(\mathbb{K}, d^K)$ be two chain complexes, and ${\Phi}: \mathbb{K}\longrightarrow\mathbb{F}$ be a chain map. Dualizing this, one gets a cochain map ${\Phi^*}: \mathbb{F^*}\longrightarrow\mathbb{K^*}$. Consider the mapping cones of $\Phi$  and $\Phi^*$. Their $n$th components are $\cone({\Phi})_n = K_{n-1} \oplus F_n$ and $\cone(\Phi^*)^n = (\mathbb{F}^*)^{n+1} \oplus (\mathbb{K}^*)^n = (F_{n+1})^* \oplus (K_n)^*$, while the differentials are
    \[
    d^{\cone(\Phi)} = \begin{bmatrix}
    -d^K & 0 \\
    -\Phi & d^F 
    \end{bmatrix} \quad \mbox{and} \quad
    d_{\cone(\Phi^*)} = \begin{bmatrix}
    -(d^F)^* &  0 \\
    -\Phi^* & (d^K)^* 
    \end{bmatrix}
    \]
    respectively, cf.~\cite[1.5.1]{We94}. More precisely, the differentials of $\cone(\Phi^*)$ are
    \begin{equation}\label{diff-cone-phi-star}
    d^{n}_{\cone(\Phi^*)} = \begin{bmatrix}
    -(d^F_{n+2})^* &  0 \\
    -(\Phi_{n+1})^* & (d^K_{n+1})^* 
    \end{bmatrix} : \cone(\Phi^*)^n \to \cone(\Phi^*)^{n+1}.
    \end{equation}
    Changing the signs of the differentials of $\mathbb{K}$ and $\mathbb{F}$, ${\Phi}: (\mathbb{K}, -d^K)\longrightarrow(\mathbb{F}, -d^F)$ is also a chain map, whose mapping cone is $\cone({\Phi}) = \mathbb{K}[-1] \oplus \mathbb{F}$ with the differential
    \[
    d^{\cone(\Phi)} = \begin{bmatrix}
    d^K & 0 \\
    -\Phi & -d^F 
    \end{bmatrix}.
    \]
    We rewrite this chain complex as $\cone(\Phi) = \mathbb{F}\oplus \mathbb{K}[-1]$, where the differentials are
    \begin{equation}\label{diff-cone-phi}
    \begin{bmatrix}
    -d^F & -\Phi \\
    0 & d^K 
    \end{bmatrix}, \mbox{ i.e., } d^{\cone(\Phi)}_n = \begin{bmatrix}
    -d^F_n & -\Phi_{n-1} \\
    0 & d^K_{n-1} 
    \end{bmatrix} : \cone(\Phi)_n \to \cone(\Phi)_{n-1}.
    \end{equation}
    Dualizing \eqref{diff-cone-phi-star}, and comparing it with \eqref{diff-cone-phi}, one has $d^{\cone(\Phi)}_n= \Big(d^{n-2}_{\cone(\Phi^*)}\Big)^*$.
\end{para}


\section{Proofs of the main results}\label{sec:main-results}

We first prepare a number of lemmas to prove our main results.

\begin{lemma}\label{lemma}
  Let $R$ be a Gorenstein local ring. Let $F$ and $G$ be free $R$-modules. Let $\xi: F\rightarrow G$ be an $R$-module homomorphism such that $\coker\xi$ is  MCM. Then there exists an exact sequence
  \begin{equation*}
            0 \rightarrow (\coker \xi)^*\longrightarrow G^*\xlongrightarrow{\xi^*} F^*\longrightarrow
           (\Ker\xi)^* \rightarrow 0.
    \end{equation*}
    Moreover, $\Image(\xi^*) \cong (\Image\xi)^*$.
\end{lemma}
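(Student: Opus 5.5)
Split $\xi$ into two short exact sequences and dualize each, using that over a Gorenstein local ring an MCM module $C$ satisfies $\Ext^i_R(C,R)=0$ for all $i\ge 1$. Set $C:=\coker\xi$, $I:=\Image\xi$ and $K:=\Ker\xi$. Then we have the exact sequences
\[
0\to I\to G\to C\to 0 \quad\text{and}\quad 0\to K\to F\to I\to 0,
\]
and $\xi$ factors as $F\twoheadrightarrow I\hookrightarrow G$.

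First dualize $0\to I\to G\to C\to 0$. This gives
\[
0\to C^*\to G^*\to I^*\to \Ext^1_R(C,R)=0 .
\]
So $G^*\to I^*$ is surjective, and its kernel is $C^*$. Next, since $G$ is free and $C$ is MCM, the exact sequence shows $I$ is MCM: by the depth lemma, $\depth I\ge \min\{\depth G,\depth C+1\}=\dim R$. Hence $\Ext^1_R(I,R)=0$ as well. Dualizing $0\to K\to F\to I\to 0$ therefore gives
\[
0\to I^*\to F^*\to K^*\to \Ext^1_R(I,R)=0 .
\]

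Now splice the two sequences. The dual map $\xi^*\colon G^*\to F^*$ is the composite $G^*\twoheadrightarrow I^*\hookrightarrow F^*$, because dualizing is functorial and $\xi$ is the composite $F\to I\to G$. Consequently:
\begin{itemize}
\item $\Ker\xi^*=\Ker(G^*\to I^*)=C^*$;
\item $\Image\xi^*$ is the image of $I^*$ in $F^*$, which is isomorphic to $I^*=(\Image\xi)^*$ since $I^*\to F^*$ is injective;
\item $\coker\xi^*=F^*/I^*\cong K^*$.
\end{itemize}
Together these give the four-term exact sequence and the isomorphism $\Image(\xi^*)\cong(\Image\xi)^*$.

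The only step that needs care is the vanishing $\Ext^1_R(I,R)=0$, which requires $I$ to be MCM (or zero). The depth lemma handles this. The degenerate cases where $I=0$ or $C=0$ are trivial, and when $\dim R=0$ every module is MCM, so the argument applies there too. Everything else is formal.
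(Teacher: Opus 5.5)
Your proposal is correct and follows essentially the same route as the paper. Both factor $\xi$ as $F \twoheadrightarrow \Image\xi \hookrightarrow G$, use the depth lemma to see that $\Image\xi$ is MCM, and dualize the two short exact sequences, which stay exact because $\Ext^1_R(-,R)$ vanishes on MCM modules over a Gorenstein ring; the isomorphism $\Image(\xi^*)\cong(\Image\xi)^*$ then follows because $\xi^*$ is a surjection onto $(\Image\xi)^*$ followed by an injection into $F^*$. The only cosmetic difference is that the paper also notes $\Ker\xi$ is MCM, which, as your argument shows, is not actually needed.
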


\begin{proof}
The homomorphism $\xi$ induces a commutative diagram of exact sequences: 
\begin{equation}\label{diagram}    
\xymatrixrowsep{1.2em}
\xymatrixcolsep{0.8em}
\xymatrix{
0 \ar[rr] && \ker\xi \ar[rr] && F \ar[rr]^{\xi} \ar@{->>}[rd]^{\pi} && G \ar[rr] && \coker{\xi} \ar[rr] && 0\\
       &    &&     && \Image{\xi} \ar@{^{(}->}[ru]^i.
}
 \end{equation} 
Since $\coker\xi$ is MCM, and $F, G$ are free $R$-modules, by depth lemma, $\Image\xi$ and $\Ker\xi$ are MCM.
Therefore, $0 \rightarrow \Ker\xi \rightarrow F \rightarrow \Image\xi \rightarrow 0 $ and $0\rightarrow \Image\xi \rightarrow G\rightarrow \coker\xi\rightarrow 0 $ are exact sequences of MCM modules. It follows that their dual (with respect to $R$) sequences are also exact. Hence one has a commutative diagram of exact sequences:
\begin{equation}\label{dual}
\xymatrixrowsep{1.2em}
\xymatrixcolsep{0.7em}
\xymatrix{
0 \ar[rr] && (\coker \xi)^* \ar[rr] && G^* \ar[rr]^{\xi^*} \ar@{->>}[rd]^{i^*} && F^* \ar[rr] && (\Ker\xi)^* \ar[rr] && 0\\
  &    &&     && (\Image{\xi})^* \ar@{^{(}->}[ru]^{\pi^*}.
}
 \end{equation}  
 Note that the commutative diagram \eqref{dual} is the dual of \eqref{diagram}. By the functorial property of $ \Hom_R(-,R)$, one gets $\xi^* = \pi^*\circ i^*$. Therefore, $\Image(\xi^*) = \Image(\pi^*)$ as $i^*$ is onto. Since $\pi^*$ is injective, it follows that $\Image(\xi^*) \cong (\Image\xi)^*$.
\end{proof}

\begin{setup}\label{set}
    Let $R$ be a Gorenstein local ring, and let $M$ be a CM $R$-module of codimension $g$. Let $\mathbb{F}: \dots\rightarrow F_1 \rightarrow F_0\xlongrightarrow{\epsilon} M\rightarrow0 $ be a minimal $R$-free resolution of $M$ along with an augmentation $\epsilon$. Let $\fa$ be a Gorenstein perfect ideal of grade $g$ such that $\fa \subseteq \ann_R(M)$. Set $\overline{(-)}:= (-)\otimes_R R/\fa$. Observe that $\pd_R(\overline{F_0})=\pd_R(R/\fa)=g$ and $\overline{F_0}$ is CM. Let $\mathbb{K}: 0\rightarrow K_g\rightarrow\dots\rightarrow K_1 \rightarrow K_0\rightarrow 0 $ be a minimal $R$-free resolution of $\overline{F_0}$.
    Let ${\Phi}: \mathbb{K} \longrightarrow\mathbb{F}$ be the chain map induced by $\overline{\epsilon} : \overline{F_0} \lra M$. Suppose the mapping cone of $\Phi^*$, as discussed in \ref{rem}, is given by
    \begin{equation}\label{equation}
    0\rightarrow F_0^*\rightarrow F_1^*\oplus K_0^*\rightarrow \dots \rightarrow  F_g^*\oplus K_{g-1}^* \xlongrightarrow{\psi} F_{g+1}^*\oplus K_{g}^*\xlongrightarrow{\xi} F_{g+2}^* \rightarrow\cdots.
\end{equation}
\end{setup}

\begin{remark}
    With {\rm Setup~\ref{set}}, as $\fa\subseteq \ann (M)$, it follows that $\overline{F_1} \rightarrow \overline{F_0} \rightarrow M \rightarrow 0$ is a minimal $\overline{R}$-free presentation of $M$. Applying $\Hom_{\overline{R}}(-,\overline{R})$, it yields the following commutative diagram of exact sequences:
    \begin{equation*}
    \xymatrixrowsep{1.4em}
\xymatrixcolsep{0.42em}
\xymatrix{
0 \ar[rr] && \Hom_{\overline{R}}(M,\overline{R}) \ar[rr] && \Hom_{\overline{R}}(\overline{F_0},\overline{R}) \ar[rr] \ar@{->>}[rd] && \Hom_{\overline{R}}(\overline{F_1},\overline{R}) \ar[rr] && \Tr_{\overline{R}}(M) \ar[rr] && 0\\
         &    &&     && \lambda_{\overline{R}}(M) \ar@{^{(}->}[ru].
}
\end{equation*}
Thus, there is a short exact sequence \begin{equation}\label{e}
    0 \to \Hom_{\overline{R}}(\overline{M},\overline{R}) \longrightarrow \Hom_{\overline{R}}(\overline{F_0},\overline{R}) \longrightarrow \lambda_{\overline{R}}(M) \to 0.
\end{equation}
\end{remark}

In the proof of our main result, the mapping cone $\cone(\Phi^*)$ in Setup~\ref{set} plays an important role. So we study some properties of $\cone(\Phi^*)$.

\begin{lemma}\label{lem}
   With {\rm Setup~\ref{set}}, the following hold true.
   \begin{enumerate}[\rm (1)]
       \item $H^{i}(\cone(\Phi^*)) = 0$ for all $i \neq g$.
       \item $H^g(\cone(\Phi^*)) \cong \lambda_{\overline{R}}(M)$, i.e., $\Ker(\xi)/\Image(\psi) \cong \lambda_{\overline{R}}(M)$, where $\psi$ and $\xi$ are the differentials described in {\rm\eqref{equation}}.
   \end{enumerate}
\end{lemma}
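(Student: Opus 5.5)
The plan is to compute the cohomology of $\cone(\Phi^*)$ through the long exact sequence of the cone,
\[
\cdots \to H^{i}(\mathbb{F}^*) \xrightarrow{\Phi^*} H^{i}(\mathbb{K}^*) \to H^{i}(\cone(\Phi^*)) \to H^{i+1}(\mathbb{F}^*) \xrightarrow{\Phi^*} H^{i+1}(\mathbb{K}^*) \to \cdots,
\]
and to read off each term using the Gorenstein/CM hypotheses. With the indexing of \eqref{equation}, $\cone(\Phi^*)^n = F_{n+1}^*\oplus K_n^*$, which is the indexing that yields this sequence.

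\emph{Computing the two pieces.} We have $H^i(\mathbb{F}^*) = \Ext^i_R(M,R)$. Since $R$ is Gorenstein and $M$ is CM of codimension $g$, this vanishes for $i\neq g$, by local duality or by grade $=\codim$ together with the bound $\id R<\infty$. Likewise $H^i(\mathbb{K}^*) = \Ext^i_R(\overline{F_0},R)$ vanishes for $i\ne g$. Moreover, since $\fa$ is Gorenstein perfect of grade $g$, $\Ext^g_R(R/\fa,R)\cong R/\fa$, so $H^g(\mathbb{K}^*) \cong \Hom_{\overline R}(\overline{F_0},\overline R)$. The long exact sequence then gives $H^i(\cone(\Phi^*))=0$ for $i\notin\{g-1,g\}$, together with an exact sequence
\[
0\to H^{g-1}(\cone)\to \Ext^g_R(M,R)\xrightarrow{\Ext^g(\overline\epsilon,R)} \Ext^g_R(\overline{F_0},R)\to H^{g}(\cone)\to 0 .
\]

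\emph{Identifying the map.} By Rees' lemma, applicable because $\fa$ contains a regular sequence of length $g$ annihilating both modules, one has a natural isomorphism $\Ext^g_R(-,R)\cong\Hom_{R/\fa}(-,\Ext^g_R(R/\fa,R))\cong \Hom_{\overline R}(-,\overline R)$ on $R/\fa$-modules. This requires a little care: pass first to a complete intersection $\mathfrak{b}\subseteq\fa$ of grade $g$ to get $\Ext^g_R(X,R)\cong\Hom_R(X,R/\mathfrak b)\cong \Hom_R(X,\Hom_R(R/\fa,R/\mathfrak b))$, and then use that $\Hom_R(R/\fa,R/\mathfrak b)\cong \Ext^g_R(R/\fa,R)\cong R/\fa$. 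Under this identification, the middle map is $\Hom_{\overline R}(\overline\epsilon,\overline R)$, i.e. the dual of the surjection $\overline{F_0}\to M$. This dual is injective, so $H^{g-1}(\cone)=0$, which proves (1). Its cokernel is exactly $\lambda_{\overline R}(M)$ by the short exact sequence \eqref{e}, which proves (2). Finally, $H^g$ of the cone is $\Ker(\xi)/\Image(\psi)$ by the labeling in \eqref{equation}.

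\emph{Main obstacle.} The delicate step is verifying that the connecting map induced by the chain map $\Phi$ really corresponds, under the Rees/Gorenstein isomorphisms, to $\overline\epsilon^*$. This needs naturality of the Rees isomorphism in the first variable, and the fact that $\Phi$ lifts $\overline\epsilon$. I would handle it by naturality: $\Phi$ is a lift of $\overline\epsilon$, so the induced map on $\Ext^g_R(-,R)$ is $\Ext^g_R(\overline\epsilon,R)$, and the Rees isomorphism is functorial in that variable. Sign and index bookkeeping for the cone conventions of \ref{rem} should be checked but only affects signs.
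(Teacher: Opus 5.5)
Your proposal is correct and follows the paper's proof essentially step for step. Both arguments take the long exact sequence of $0\to\mathbb{K}^*\to\cone(\Phi^*)\to\mathbb{F}^*[-1]\to 0$, use the vanishing of $\Ext^i_R(M,R)$ and $\Ext^i_R(\overline{F_0},R)$ for $i\neq g$, and compare the remaining four-term sequence with \eqref{e} via the natural isomorphism $\Ext^g_R(-,R)\cong\Hom_{\overline{R}}(-,\overline{R})$. The only difference is that you derive this isomorphism from Rees' lemma together with $\Ext^g_R(R/\fa,R)\cong R/\fa$, and you state explicitly that the middle map is $\Ext^g_R(\overline{\epsilon},R)$; the paper instead cites \cite[p.~614, Lem.~11]{MS04} and leaves that identification implicit in its commutative diagram.
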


\begin{proof}
 The mapping cone of $\Phi^*$ gives a short exact sequence of complexes
 \begin{equation}
     0 \rightarrow \mathbb{K^*} \longrightarrow \cone(\Phi^*)\longrightarrow \mathbb{F^*}[-1] \rightarrow 0,
 \end{equation}
 which induces a long exact sequence of $R$-modules
 \begin{equation}\label{eq}
     \dots\rightarrow H^i(\mathbb{F^*}) \longrightarrow H^i(\mathbb{K^*}) \longrightarrow H^i(\cone \Phi^*) \longrightarrow H^{i+1}(\mathbb{F^*})\rightarrow\cdots.
 \end{equation}
Since $M$ is CM of codimension $g$, and $R$ is Gorenstein, it follows that $H^i(\mathbb{F^*})=\Ext_R^i(M,R)=0$ for all $~ i\neq g$, cf.~\cite[3.3.10]{BH98}.
For the same reason, $H^i(\mathbb{K^*}) = \Ext_R^i(\overline{F_0},R) = 0$ for all $i\neq g$. Hence, in view of \eqref{eq}, $H^i(\cone\Phi^*) = 0$ for all $i \neq g, g-1$. By \cite[p.~614, Lem.~11]{MS04}, there is a functorial isomorphism $\Ext_R^g(-,R) \cong \Hom_{\overline{R}}(-,\overline{R})$ on the category of $\overline{R}$-modules. In particular, one has that $H^g(\mathbb{F^*}) = \Ext_R^g(M,R) \cong \Hom_{\overline{R}}(M,\overline{R})$ and $H^g(\mathbb{K^*}) = \Ext_R^g(\overline{F_0},R)
 \cong \Hom_{\overline{R}}(\overline{F_0},\overline{R})$. Moreover, in view of \eqref{eq} and \eqref{e}, there exists a commutative diagram with exact rows:
\begin{equation*}
\xymatrixrowsep{1.7em}
\xymatrixcolsep{0.42em}
\xymatrix{
 0 \ar[rr] && H^{g-1}(\cone \Phi^*) \ar[rr] && \Ext_R^g(M,R) \ar[rr] \ar[d]^{\cong} && \Ext_R^g(\overline{F_0},R)\ar[d]^{\cong} \ar[rr] && H^g(\cone \Phi^*)\ar[d] \ar[rr] && 0\\
            &&  0\ar[rr]   && \Hom_{\overline{R}}(M,\overline{R}) \ar[rr] && \Hom_{\overline{R}}(\overline{F_0},\overline{R}) \ar[rr] && \lambda_{\overline{R}}(M) \ar[rr] &&0.
}
\end{equation*}
It follows that $H^{g-1}(\cone\Phi^*)=0$, and hence $H^g(\cone \Phi^*)\cong \lambda_{\overline{R}}(M)$ by the Snake Lemma. This completes the proof.
\end{proof}

\begin{lemma}\label{lmm}
With {\rm Setup~\ref{set}}, the following hold true.
   \begin{enumerate}[\rm (1)]
       \item $\pd_R(\coker\psi)$ is finite.
       \item $\Image\xi$ is zero or $($non-zero$)$ MCM depending on whether $\pd_R(M)$ is finite or infinite, respectively. The same holds for $\coker\xi$.
   \end{enumerate}
\end{lemma}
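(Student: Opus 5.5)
We read off both parts from the shape of $\cone(\Phi^*)$ in \eqref{equation} together with Lemma~\ref{lem}, which says that its only cohomology sits in degree $g$.

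For (1), the part of $\cone(\Phi^*)$ in cohomological degrees $0,\dots,g$ reads
\[
0\to F_0^*\to F_1^*\oplus K_0^*\to \cdots \to F_g^*\oplus K_{g-1}^*\xrightarrow{\psi} F_{g+1}^*\oplus K_g^*\to \coker\psi\to 0 .
\]
By Lemma~\ref{lem}(1) it is exact at each spot before $F_{g+1}^*\oplus K_g^*$. Exactness at $F_0^*$ is $H^0=0$, which uses $g\ge 1$; if $g=0$ we instead argue directly that $\coker\psi$ has a finite free resolution of length $1$. So this is a finite free resolution of $\coker\psi$, and $\pd_R(\coker\psi)\le g+1<\infty$.

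For (2), the part of the complex in degrees $\ge g+1$ is
\[
F_{g+1}^*\oplus K_g^*\xrightarrow{\xi} F_{g+2}^*\to F_{g+3}^*\to\cdots,
\]
and since $K_i=0$ for $i>g$ this is just $\mathbb F^*$ shifted. It is exact at every spot from $F_{g+2}^*$ on, because $H^i(\cone\Phi^*)=0$ for $i\ge g+1$.
\begin{itemize}
\item If $\pd_R(M)=p<\infty$, then $p=g$ by Auslander--Buchsbaum, since $M$ is CM of codimension $g$ over the CM ring $R$. Hence $F_{g+2}=0$, so $\Image\xi=0$ and $\coker\xi=0$.
\item If $\pd_R(M)=\infty$, then all $F_n\ne0$.
\end{itemize}

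In the infinite case we show $\Image\xi$ is MCM. It is the kernel of $F_{g+2}^*\to F_{g+3}^*$, that is, of $d_{g+3}^*$. That kernel is $(\coker d_{g+3})^*=\Omega_{g+2}(M)^*$, the dual of a module. Over the Gorenstein ring $R$ of dimension $d$, $\Omega_{g+2}(M)$ is MCM because $g+2>g=\codim M$ and $\depth M=d-g$. The dual of an MCM module over a Gorenstein ring is again MCM. Hence $\Image\xi\cong\Omega_{g+2}(M)^*$ is MCM.

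Next, $\Image\xi\ne0$ because the dualized minimal resolution has nonzero maps. If $d_{g+3}^*$ were injective on $F_{g+2}^*$, then its dual, which is $d_{g+3}$ back again since the modules are free, would be surjective. That contradicts minimality with $F_{g+2}\neq 0$. So $\Ker d_{g+3}^*\ne 0$. Alternatively, $\Omega_{g+2}(M)^*\ne0$ since a nonzero MCM module over a Gorenstein ring is reflexive.

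For $\coker\xi$: as $\Image\xi=\Ker(F_{g+2}^*\to F_{g+3}^*)$, we have $\coker\xi\cong\Image(d_{g+3}^*)$, which embeds in $F_{g+3}^*$. It sits in the exact sequence
\[
0\to\coker\xi\to F_{g+3}^*\to F_{g+4}^*\to\cdots,
\]
so $\coker\xi\cong\Ker(d_{g+4}^*)\cong\Omega_{g+3}(M)^*$. The same argument then shows it is nonzero and MCM.

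The step needing most care is the identification $\Image\xi\cong\Omega_{g+2}(M)^*$, i.e.\ the index bookkeeping in the cone, together with the exactness of $\mathbb F^*$ in the high degrees. The latter is justified because $\Ext^i_R(M,R)=0$ for $i>g$.
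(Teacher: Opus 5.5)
Part (1) is the paper's argument. One correction: since $\cone(\Phi^*)^n=F_{n+1}^*\oplus K_n^*$, the term $F_0^*$ sits in degree $-1$, not $0$, and $F_{g+1}^*\oplus K_g^*$ sits in degree $g$. So Lemma~\ref{lem}(1) already gives exactness at $F_0^*$ for every $g\ge 0$. Your separate treatment of $g=0$, which you leave unargued, is unnecessary.

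For (2), the finite case matches the paper, but your route in the infinite case is different.
\begin{itemize}
\item The paper gets MCM-ness directly from the depth lemma applied to the exact coresolution $0\to\Image\xi\to F_{g+2}^*\to F_{g+3}^*\to\cdots$, and likewise for $\coker\xi$.
\item It gets $\Image\xi\neq0$ from the shape of $\xi$: its only nonzero block is $-(d^F_{g+2})^*$, so $\Image\xi=0$ forces $d^F_{g+2}=0$.
\item It gets $\coker\xi\neq0$ from minimality, since $\Image\xi\subseteq\fm F_{g+2}^*$.
\end{itemize}
You instead identify $\Image\xi\cong\Omega_{g+2}(M)^*$ and $\coker\xi\cong\Omega_{g+3}(M)^*$. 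You then use that $\Hom_R(-,R)$ preserves MCM modules over a Gorenstein ring. This is correct. It costs one extra standard duality fact beyond the depth lemma, but it gives more information. It is essentially the identification the paper later extracts via Lemma~\ref{lemma} in the proof of Theorem~\ref{thm:main}(4).

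One step in your proposal is false, though redundant. An injective map of free modules need not have surjective dual: take $R\xrightarrow{x}R$ with $x\in\fm$ a non-zero-divisor. So injectivity of $d_{g+3}^*$ does not yield surjectivity of $d_{g+3}$. Likewise, ``the dualized maps are nonzero'' does not give a nonzero kernel. Your alternative argument via reflexivity is valid. An even simpler one: $\Omega_{g+2}(M)$ is a nonzero submodule of the free module $F_{g+1}$, so some coordinate functional is nonzero on it. Keep that argument and discard the first.
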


\begin{proof}
    By virtue of Lemma~\ref{lem}.(1), the complex \eqref{equation} of free $R$-modules yields the following exact sequences:
    \begin{align}
        &0\rightarrow F_0^*\rightarrow F_1^*\oplus K_0^*\rightarrow \dots \rightarrow  F_g^*\oplus K_{g-1}^* \xlongrightarrow{\psi} F_{g+1}^*\oplus K_{g}^* \rightarrow \coker\psi \to 0 \label{free-reso-coker-psi}\\
        &\mbox{and } \quad 0\rightarrow \Image{\xi}\longrightarrow F_{g+2}^* \longrightarrow F_{g+3}^*\longrightarrow F_{g+4}^*\rightarrow \cdots.\label{image-xi-syz}
    \end{align}
    The first assertion is now obvious from \eqref{free-reso-coker-psi}. For the second assertion, note that $M$ is CM of codimension $g$. So, if $\pd_R(M)$ is finite, then by the Auslander-Buchsbaum Formula, $\pd_R(M)=g$, hence $F_i=0$ for all $i>g$, which implies that $\Image{\xi}=0$.
    Conversely, let $\Image\xi=0$. In view of \ref{rem} (particularly, \eqref{diff-cone-phi-star}), note that
    $\xi = d^{g}_{\cone(\Phi^*)} = \begin{bmatrix}
    -(d^F_{g+2})^* &  0 \\
    0 & 0 
    \end{bmatrix}$.
    Since $\xi$ is a map from a free module to another free module, one gets that $d^F_{g+2}=0$, which yields that $\pd_R(M)$ is finite. Thus $\Image{\xi}=0$ \iff $\pd_R(M)$ is finite. When $\pd_R(M)$ is infinite, one has that $F_i\neq 0$ for all $i\ge 0$, and it follows from \eqref{image-xi-syz} and the Depth Lemma that $\Image{\xi}$ is MCM. Since $d^F_{g+2}$ is a map in a minimal free resolution, observe that $\coker{\xi}=0$ \iff $F_{g+2}^*=0$, which is equivalent to that $\pd_R(M)$ is finite. When $\coker{\xi}\neq 0$, by \eqref{image-xi-syz} and the Depth Lemma, $\coker{\xi}$ is MCM.   
\end{proof}

Next we show that with {\rm Setup~\ref{set}}, every $R$-module linked to $M$ by $\fa$ has the same projective (resp., injective) complexity and curvature as that of $\Image{\xi}$.

\begin{lemma}\label{lem:cx-curv-equa}
With {\rm Setup~\ref{set}}, assume that $N$ is an $R$-module which is linked to $M$ by $\fa$. Then there exists a short exact sequence $0\rightarrow N \rightarrow \coker{\psi}\rightarrow \Image{\xi} \rightarrow 0$. Moreover, the following statements hold true.
   \begin{enumerate}[\rm (1)]
        \item $\cx_R(N)= \cx_R(\Image{\xi}) $ and $\curv_R(N)= \curv_R(\Image{\xi}) $.
        \item $\injcx_R(N)= \injcx_R(\Image{\xi}) $ and $\injcurv_R(N)= \injcurv_R(\Image{\xi}) $.
    \end{enumerate}
\end{lemma}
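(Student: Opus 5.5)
Since $N$ is linked to $M$ by $\fa$, we have $N \cong \lambda_{\overline{R}}(M)$. By Lemma~\ref{lem}.(2), $\lambda_{\overline{R}}(M) \cong \Ker(\xi)/\Image(\psi)$, so $N$ is identified with $\Ker\xi/\Image\psi$. Because $\Image\psi \subseteq \Ker\xi \subseteq F_{g+1}^*\oplus K_g^*$, the quotient $\Ker\xi/\Image\psi$ is a submodule of $\coker\psi = (F_{g+1}^*\oplus K_g^*)/\Image\psi$. The map $\coker\psi \to F_{g+2}^*$ induced by $\xi$ then has kernel $\Ker\xi/\Image\psi$ and image $\Image\xi$. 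This gives the short exact sequence
\[
0\to N \to \coker\psi \to \Image\xi \to 0 .
\]
This step is essentially bookkeeping once Lemma~\ref{lem} is available.

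For (1), Lemma~\ref{lmm}.(1) says $\pd_R(\coker\psi)<\infty$. Remark~\ref{rmk:ses-cx}.(1), applied to the sequence above with $K=\coker\psi$, gives $\cx_R(N)=\cx_R(\Image\xi)$ and $\curv_R(N)=\curv_R(\Image\xi)$.

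For (2), we use that $R$ is Gorenstein, so a finitely generated module has finite projective dimension if and only if it has finite injective dimension. Hence $\id_R(\coker\psi)<\infty$. Remark~\ref{rmk:ses-cx}.(2) then yields $\injcx_R(N)=\injcx_R(\Image\xi)$ and $\injcurv_R(N)=\injcurv_R(\Image\xi)$.

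The only delicate point is that the identification $N\cong \Ker\xi/\Image\psi$ holds only up to isomorphism. This is harmless, since Betti and Bass numbers are isomorphism invariants, so we may replace $N$ by $\Ker\xi/\Image\psi$ throughout. The degenerate case $\pd_R(M)<\infty$, where $\Image\xi=0$, is covered by the same sequence: it gives $N\cong\coker\psi$, and both sides of each equality are then $0$.
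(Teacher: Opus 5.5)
Your proposal is correct and follows essentially the same route as the paper. The map $\xi$ induces $\coker\psi \to \Image\xi$ with kernel $\Ker\xi/\Image\psi \cong \lambda_{\overline{R}}(M) \cong N$, and both equalities then come from Remark~\ref{rmk:ses-cx}, using $\pd_R(\coker\psi)<\infty$ and, because $R$ is Gorenstein, $\id_R(\coker\psi)<\infty$.
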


\begin{proof}
In the mapping cone \eqref{equation} of $\Phi^*$, the map $\xi$ induces a short exact sequence 
 \begin{equation}{\label{cx}}
     0 \rightarrow \frac{ \Ker{\xi}}{\Image{\psi} }\longrightarrow \frac{F_{g+1}^*\oplus K_{g}^*}{\Image{\psi} }\longrightarrow \Image{\xi} \rightarrow 0.
 \end{equation}
 By virtue of Lemma~\ref{lem}.(2), $\Ker{\xi}/\Image{\psi} \cong \lambda_{\overline{R}}(M) \cong N$. Hence \eqref{cx} yields a short exact sequence $0\rightarrow N \rightarrow \coker{\psi}\rightarrow \Image{\xi} \rightarrow 0$. Note that $\pd_R{(\coker{\psi})}$ is finite (by Lemma~\ref{lmm}.(1)), and $R$ is Gorenstein. It follows that $\id_R{(\coker{\psi})}$ is finite. Therefore, the desired equalities are obtained from Remark~\ref{rmk:ses-cx}.
\end{proof}

The modules $M$ and $(\Image{\xi})^*$ have the same complexity (resp., curvature).

\begin{lemma}\label{lemm}
   With {\rm Setup~\ref{set}}, the following hold:
    \begin{center}
        $\cx_R(M)= \cx_R\big((\Image{\xi})^*\big)$ \quad and \quad $\curv_R(M)= \curv_R\big( (\Image{\xi})^* \big)$.
    \end{center}
\end{lemma}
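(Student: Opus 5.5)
The plan is to identify $(\Image\xi)^*$ with a high syzygy of $M$. Complexity and curvature only depend on the tail of the Betti sequence, and the Betti numbers of a syzygy are a shift of those of $M$, so the claimed equalities will follow at once from such an identification.

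First, the finite case. If $\pd_R(M)<\infty$, then $\Image\xi=0$ by Lemma~\ref{lmm}.(2). So $(\Image\xi)^*=0$, and both sides of each equality are $0$ by Remark~\ref{remark}.(1).

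Now assume $\pd_R(M)=\infty$. As noted in the proof of Lemma~\ref{lmm}, \eqref{diff-cone-phi-star} gives
\[
\xi=d^{g}_{\cone(\Phi^*)}=\begin{bmatrix} -(d^F_{g+2})^* & 0\\ 0 & 0\end{bmatrix}\colon F_{g+1}^*\oplus K_g^*\to F_{g+2}^*,
\]
because $K_{g+1}=0$ and $\Phi_{g+1}=0$. Hence $\Image\xi=\Image\big((d^F_{g+2})^*\big)$, the sign being irrelevant.

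Next I apply Lemma~\ref{lemma} to $d^F_{g+2}\colon F_{g+2}\to F_{g+1}$. Its cokernel is $\Omega^R_{g+1}(M)$. Since $R$ is Gorenstein (hence CM) and $\depth M=\dim R-g$, the depth lemma shows that $\Omega^R_{n}(M)$ is MCM for all $n\ge g$; in particular $\coker d^F_{g+2}$ is MCM. Lemma~\ref{lemma} then yields
\[
\Image\xi\cong\big(\Image d^F_{g+2}\big)^*=\big(\Omega^R_{g+2}(M)\big)^*.
\]
Dualizing once more, I get $(\Image\xi)^*\cong \Omega^R_{g+2}(M)^{**}\cong\Omega^R_{g+2}(M)$, since MCM modules over a Gorenstein local ring are reflexive.

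Finally, because $\mathbb F$ is minimal, $\beta^R_n(\Omega^R_{g+2}(M))=\beta^R_{n+g+2}(M)$ for all $n\ge 0$. A shift of the index does not change the least polynomial degree bounding the sequence, so the complexities agree. Nor does it change the $\limsup$ of $n$-th roots, since $\sqrt[n]{\beta_{n+c}}$ and $\sqrt[n+c]{\beta_{n+c}}$ have the same $\limsup$ (when the terms grow at most exponentially, which Betti numbers do), so the curvatures agree too. This gives $\cx_R(M)=\cx_R((\Image\xi)^*)$ and $\curv_R(M)=\curv_R((\Image\xi)^*)$.

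The only delicate step is checking the exact shape of $\xi$ from the cone differential and the index bookkeeping in Lemma~\ref{lemma}, namely which syzygy appears as $\coker$ and which as $\Image$. The rest is standard.
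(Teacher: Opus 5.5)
Your proof is correct and essentially follows the paper's route. Both arguments identify $(\Image\xi)^*$ with the syzygy $\Omega^R_{g+2}(M)$ via Lemma~\ref{lemma} and then shift Betti numbers. The paper applies Lemma~\ref{lemma} to $\xi$ itself, using that $\coker\xi$ is MCM, and reads off the minimal resolution of $\Image(\xi^*)$ from the dualized cone. You instead apply it to $d^F_{g+2}$, as the paper itself does in the proof of Theorem~\ref{thm:main}.(4), and then use reflexivity of MCM modules over a Gorenstein ring.
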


\begin{proof}
In view of Lemma~\ref{lmm}.(2), if $\pd_R(M)$ is finite, then $\Image{\xi}=0$, hence the result follows from Remark~\ref{remark}.(1). So we may assume that $\pd_R(M)$ is infinite. Then, by Lemma~\ref{lmm}.(2), $\Image{\xi}$ and $\coker\xi$ are non-zero and MCM. Hence, by Lemma~\ref{lemma}, $\Image(\xi^*) \cong (\Image\xi)^*$.

By the discussion made in \ref{rem}, note that $d^{\cone(\Phi)}_n= \big(d^{n-2}_{\cone(\Phi^*)}\big)^*$. Thus, dualizing \eqref{equation}, one has the mapping cone of $\Phi$, which is
\begin{equation}\label{cone-phi}
    \dots \rightarrow F_{g+3} \rightarrow F_{g+2} \xlongrightarrow{\xi^*} F_{g+1}\oplus K_g \xlongrightarrow{\psi^*} F_g\oplus K_{g-1} \rightarrow \dots \rightarrow F_0 \rightarrow 0.
\end{equation}
Consider the short exact sequence $0\rightarrow \mathbb{F} \rightarrow \cone\Phi \rightarrow \mathbb{K}[-1] \rightarrow 0$ of chain complexes, see, e.g., \cite[1.5.2]{We94}. This induces a long exact sequence
$$\dots\rightarrow H_{n+1}(\mathbb{F}) \longrightarrow H_{n+1}(\cone \Phi) \longrightarrow H_n(\mathbb{K}) \longrightarrow H_n(\mathbb{F}) \rightarrow\cdots.$$
Hence, since $\mathbb{F}$ and $\mathbb{K}$ are acyclic,
it follows that $H_{n}(\cone \Phi) = 0$ for all $ n\ge2$.
So, in particular, the complex \eqref{cone-phi} yields an exact sequence
\begin{equation}\label{res}
   \dots\longrightarrow F_{g+4}\longrightarrow F_{g+3} \longrightarrow F_{g+2}\longrightarrow \Image{\xi^*}\rightarrow 0. 
\end{equation}
Note that \eqref{res} comes from $\mathbb{F}$, which is a minimal free resolution of $M$. So \eqref{res} gives a minimal free resolution of $\Image{\xi^*}$. Thus $ \beta_n^R{(\Image{\xi^*})} = \beta_{n+g+2}^R(M) $ for all $ n \ge 0 $. It follows that $ \cx_R(M) = \cx_R(\Image{\xi^*}) $ and $\curv_R(M) = \curv_R(\Image{\xi^*})$. Hence, since $\Image(\xi^*) \cong (\Image\xi)^*$, one obtains the desired equalities.
\end{proof}

Now we are in a position to prove our main results.


\begin{proof}[Proof of Theorem~\ref{thm:main}]
In view of \cite[p.~616, Cor.~15]{MS04}, the $R$-module $M$ is CM \iff $N$ is CM. So we may assume that both $M$ and $N$ are CM. Denote $g:=\htt(\fa)$. Then, by \cite[p.~616, Lem.~14]{MS04}, $\grade_R(M)=g$. Since $R$ is Gorenstein and $M$ is CM, in view of \cite[3.3.10]{BH98}, $\codim(M)=\grade_R(M)=g$. For the same reason, $\codim(N)=g$. Thus, we may work with exactly the same setup as described in Setup~\ref{set}.

(1) 
First, assume that $\pd_R(M)$ is finite. Then $\cx_R(M) = \curv_R(M) = 0$. Moreover, by Lemma~\ref{lmm}.(2), $\Image{\xi}=0$, which implies that $N \cong \coker\psi$ (cf.~Lemma~\ref{lem:cx-curv-equa}). Thus, by Lemma~\ref{lmm}.(1), $\pd_R(N)$ is finite. Since $R$ is Gorenstein, it follows that $\id_R(N)$ is finite. Hence $\injcx_R(N) = \injcurv_R(N) = 0$, and one has the desired equalities. So we may assume that $\pd_R(M)$ is infinite. In this case, by Lemma~\ref{lmm}.(2), $\Image{\xi}$ is MCM. Therefore, by \cite[Prop.~3.15.(2)]{DGS24},
\begin{equation}\label{DGS-3.15}
    \injcx_R(\Image{\xi})= \cx_R(\Image{\xi})^*, \ \injcurv_R(\Image{\xi})=\curv_R(\Image{\xi})^*.
\end{equation}
On the other hand, by Lemma~\ref{lem:cx-curv-equa},
\begin{equation}\label{inj-cx-N-image-xi}
    \injcx_R(N)= \injcx_R(\Image{\xi}) \ \mbox{ and} \ \injcurv_R(N)= \injcurv_R(\Image{\xi}).
\end{equation}
As $\cx_R(M)= \cx_R(\Image{\xi})^*$ and $\curv_R(M)= \curv_R(\Image{\xi})^*$ (cf.~Lemma~\ref{lemm}), the desired equalities follow from \eqref{DGS-3.15} and \eqref{inj-cx-N-image-xi}.

 (2) Since $R$ is a complete intersection ring, by \cite[Thm.~II.(1)]{AvBu00}, one has that $\injcx_R(N)=\cx_R(N)$. Hence (1).(i) yields that $\cx_R(M)=\cx_R(N)$. Here both $\cx_R(M)$ and $\cx_R(N)$ are finite, cf.~\cite[Thm.~(1.3) and Thm.~(5.3)]{AGP97}. Therefore, by Remark~\ref{remark}.(3), $\curv_R(M),\curv_R(N)\in \{0,1\}$. So, the equality $\curv_R(M)=\curv_R(N)$ follows from the following implications:
 \[
    \curv_R(M)=0 \Longleftrightarrow \cx_R(M)=0 \Longleftrightarrow \cx_R(N)=0 \Longleftrightarrow \curv_R(N)=0.
 \]

 (3) The proof of (1) shows that if $\pd_R(M)$ is finite, then $\pd_R(N)$ is so. Changing the role of $M$ and $N$, it follows that $\pd_R(M)$ is finite \iff $\pd_R(N)$ is finite. Note that both $M$ and $N$ are CM of codimension $g$. So, whenever $\pd_R(M)$ and $\pd_R(N)$ are finite, they are equal to $g=\htt(\fa)$ by the Auslander-Buchsbaum formula.
 

(4) 
In view of the discussion made in the first paragraph of the proof, note that whenever $\cid_R(M)$ and $\cid_R(N)$ are finite, by \cite[Thm.~(1.4)]{AGP97}, they are equal to $\htt(\fa)$. Thus, as before, it suffices to show that if $\cid_R(M)$ is finite, then $\cid_R(N)$ is finite. So we assume that $\cid_R(M)$ is finite. Then $\cid_R(M)=\htt(\fa)=g$. It follows that $\cid_R(\Omega_n^R(M))=0$ for all $n\ge g$, cf.~\cite[Lem.~(1.9)]{AGP97}. In particular, following the notation as in \ref{rem} and Setup~\ref{set},
\begin{center}
    $\cid_R(\Image{d^F_{g+2}})=0$.
\end{center}
If $\pd_R(M)$ is finite, then $\pd_R(N)$ is finite by (3), and hence $\cid_R(N)$ is finite. So we may assume that $\pd_R(M)$ is infinite. Then $\coker(d^F_{g+2})$ is MCM. Therefore, by Lemma~\ref{lemma}, $(\Image{d^F_{g+2}})^*\cong\Image{\big((d^F_{g+2})^*\big)}$. Following \eqref{diff-cone-phi-star} and \eqref{equation}, note that
$\xi = \begin{bmatrix}
    -(d^F_{g+2})^* &  0 \\
    0 & 0 
\end{bmatrix}$. Thus
\begin{equation*}
    \Image{\xi}=\Image{\big((d^F_{g+2})^*\big)}\cong(\Image{d^F_{g+2}})^*.
\end{equation*}
Since $\cid_R(\Image{d^F_{g+2}})=0$, by \cite[Lem.~3.5]{BJ11}, $\cid_R(\Image{d^F_{g+2}})^*=0$, and hence $\cid_R(\Image{\xi})=0$. Next, consider the short exact sequence $0\rightarrow N \rightarrow \coker{\psi}\rightarrow \Image{\xi} \rightarrow 0$ as shown in Lemma~\ref{lem:cx-curv-equa}. Since $\pd_R(\coker{\psi})$ is finite (by Lemma~\ref{lmm}.(1)), it follows that $\cid_R(N)$ is finite, see, e.g., \cite[Lem.~3.3]{GS24}.
\end{proof}



\begin{proof}[Proof of Corollary~\ref{cor:main}]
    In view of Remark~\ref{rmk:linked-ideal-red}, $R/I$ and $R/J$ are linked by $\fa$ as $R$-modules. The complexities (resp., curvatures) of $I$ and $R/I$ are equal. Since $R$ is Gorenstein, by Remark~\ref{rmk:ses-cx}.(2), both $J$ and $R/J$ have the same injecitve complexity (resp., curvature). Hence, the corollary follows from Theorem~\ref{thm:main}.
\end{proof}
\section{Some applications and examples}\label{sec:applications}

In this section, we note a few applications of our main theorem. We also construct some examples that complement our results.

\begin{definition}\label{defn:Ulrich-Burch}
    An $R$-module $M$ is called:
    \begin{enumerate}[(1)]
        \item \label{defn:Ulrich} (\cite{BHU87}, \cite[Def.~2.1]{GTT15}) Ulrich if $M$ is non-zero CM and $e(M) = \mu(M)$.
        \item \label{defn:Burch} (\cite[Def.~3.1]{DK23}) Burch if $M$ is a submodule of an $R$-module $L$ such that
        \begin{center}
            $\fm(M:_L \fm)\ne \mathfrak{m}M$, \ i.e., \ $\fm(M:_L \fm)\nsubseteq \mathfrak{m}M$.
        \end{center}
    \end{enumerate}
\end{definition}
The notion of $\fm$-full ideals was originally defined by D.~Rees (unpublished).
\begin{definition}\label{defn:m-full-Burch-ideal}
    An ideal $I$ of $R$ is called:
    \begin{enumerate}[(1)]
        \item \cite[p.~102, Def.~4]{Wat87} 
        $\fm$-full if $(\fm I:_R x)= I$ for some $x\in \fm$.
        \item \cite[Def.~3.7]{CIST18} weakly $\fm$-full if $(\fm I : \fm) = I$.
        \item \cite[Def.~2.1]{DKT20} Burch if $I$ is Burch as a submodule of $R$, i.e., $\fm(I:_R \fm) \nsubseteq \fm I$.
    \end{enumerate}
\end{definition}

\begin{remark}
\begin{enumerate}[\rm (1)]
    \item 
    \cite[Prop.~2.2.(2)]{GTT15} If $k$ is infinite, then a CM $R$-module $M$ is Ulrich if and only if $\fm M = (\underline{x})M$ for some $M$-regular sequence $\underline{x}$.
    \item 
    From the definition, it is clear that every $\fm$-full ideal is weakly $\fm$-full. But the converse is not necessarily true, cf.~{\rm Example~\ref{exam:int-closed-is-not-preserved}}.
\end{enumerate}
\end{remark}

\begin{example}\label{exam:Burch}
\begin{enumerate}[\rm(1)]
    \item \cite[Prop.~4.6]{GS24} If $k$ is infinite, and $R$ is not a field, then any integrally closed ideal $I$ of $R$ such that $\depth(R/I)=0$ is Burch.
    \item \cite[3.4]{DK23} If $N$ is a submodule of some $R$-module $L$ such that $\fm N \ne0$, then $M := \fm N$ is a Burch submodule of $L$. So $\fm N$ is a Burch $R$-module. In particular, for an ideal $I$ of $R$, if $\fm I$ is non-zero, then $\fm I$ is a Burch ideal.
\end{enumerate}
\end{example}

\begin{para}\label{para:max-cx-curv}
It is well known that
\begin{align*}
    \cx_R(k)=\sup\{\cx_R(M)\}=\sup\{\injcx_R(M)\}=\injcx_R(k) \quad \mbox{and} \\
\curv_R(k)=\sup\{\curv_R(M)\}=\sup\{\injcurv_R(M)\}=\injcurv_R(k),
\end{align*}
where $M$ ranges over all (finitely generated) $R$–modules, cf.~\cite[Prop.~2]{Avr96}. Thus, an $R$-module $M$ is said to have maximal complexity (resp., curvature) if $\cx_R(M) = \cx_R(k)$ (resp., $\curv_R(M) = \curv_R(k)$). Similarly, modules of maximal injective complexity (resp., curvature) are defined.
\end{para}

\begin{para}\label{para:max-cx-curv-survey}
The following classes of modules have maximal complexity and curvature:
(1)~Each non-zero homomorphic image of a finite direct sum of
syzygies of the residue field \cite[Cor.~9]{Avr96}. (2)~Every Ulrich module over a CM local ring \cite[Thm.~3.8]{DG23}; see also Remark~\ref{rmk:Ulrich-inj-cx-curv}. (3)~$\fm$-primary integrally closed ideals \cite[Thm.~2.7]{GP23}. (4)~Integrally closed ideals $I$ of $R$ such that $\depth(R/I) = 0$ \cite[Cor.~5.5.(1)]{GS24}. (5)~Burch modules \cite[Rmk.~3.10]{DK23}. Among these results, note that (3) is contained in (4), while under mild conditions, (4) is contained in (5) as shown in \cite[Prop.~4.6]{GS24}. Note that Burch modules and ideals also have maximal injective complexity (resp., curvature); see \cite[Thm.~5.1.(1)]{GS24}.
\end{para}

\begin{remark}\label{rmk:Ulrich-inj-cx-curv}
    Let $M$ be an Ulrich module over a CM local ring $R$. In \cite[Thm.~3.8]{DG23}, it is shown that $M$ has maximal complexity and curvature. The same argument results in $M$ having maximal injective complexity and curvature.
\end{remark}

As an application of Theorem~\ref{thm:main}, we obtain the following.

\begin{corollary}\label{cor:Burch}
    Let $(R,\fm,k)$ be a Gorenstein local ring. Let $M$ and $N$ be two $R$-modules linked by a Gorenstein perfect ideal $\fa$. Suppose that $M$ is CM.
    \begin{enumerate}[\rm (1)]
        \item If $M$ is a non-zero homomorphic image of a direct sum of syzygy modules of $k$, then
            $\injcx_R(N)=\cx_R(k)$ and $\injcurv_R(N)=\curv_R(k)$.
       \item If $e(M)< 2\mu(M)$, then $\injcx_R(N)=\cx_R(k)$ and $\injcurv_R(N)=\curv_R(k)$.
        \item If $e(M)< 2\type(M)$, then $\cx_R(N)=\cx_R(k)$ and $\curv_R(N)=\curv_R(k)$.
        \item If $M$ is Burch $($cf.~{\rm Example~\ref{exam:Burch}}$)$, or if $ e(M) < 2 \min\{\mu(M), \type(M)\}$ $($e.g., if $M$ is Ulrich$)$, then
        \begin{enumerate}[\rm (i)]
            \item $\cx_R(N) = \injcx_R(N)=\cx_R(k)$.
            \item $\curv_R(N) = \injcurv_R(N)=\curv_R(k)$.
        \end{enumerate}
     \end{enumerate}
\end{corollary}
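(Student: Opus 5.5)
The plan is to feed known maximality results for $M$ into Theorem~\ref{thm:main}.(1), applied in both directions. Since linkage is symmetric and $N$ is CM whenever $M$ is (by \cite[Cor.~15]{MS04}), the theorem gives two identities:
\[
\cx_R(M)=\injcx_R(N),\quad \curv_R(M)=\injcurv_R(N),\quad \cx_R(N)=\injcx_R(M),\quad \curv_R(N)=\injcurv_R(M).
\]
So it suffices to show the following: in (1) and (2), $M$ has maximal projective complexity and curvature; in (3), $M$ has maximal injective complexity and curvature; in (4), both hold. The conclusions about $N$ then follow directly, using \ref{para:max-cx-curv} to identify the maximal values with $\cx_R(k)$ and $\curv_R(k)$.

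For (1), maximality of $\cx_R(M)$ and $\curv_R(M)$ is \cite[Cor.~9]{Avr96}, recalled in \ref{para:max-cx-curv-survey}(1). Note that a finite direct sum is enough, since $M$ is finitely generated.

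For (2), I would use the multiplicity criterion from \cite{DG23}, which generalizes the Ulrich case: a CM module with $e(M)<2\mu(M)$ has maximal complexity and curvature. The idea is to pass to an infinite residue field by $R[X]_{\fm[X]}$, which changes none of the invariants. One then picks a general system of parameters $\underline{x}$ of $M$ that is also $R$-regular, which is possible because $R$ is CM. Going modulo $\underline{x}$ reduces to a length estimate where $\fm^2 \bar M$ is small, so that $k$ appears as a direct summand of a syzygy, or of $\fm\bar M$.

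For (3), I would dualize (2): the analogous statement with $\type(M)$ in place of $\mu(M)$ for injective complexity and curvature. The cleanest way is to apply (2) to $M^\vee=\Ext^g_R(M,R)$. This module is CM of the same dimension, has $e(M^\vee)=e(M)$ and $\mu(M^\vee)=\type(M)$, and its Betti numbers agree with the Bass numbers of $M$ up to a shift over a Gorenstein ring. This gives $\injcx_R(M)=\cx_R(M^\vee)$ and the same for curvature.

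For (4), the Burch case is covered by \cite[Rmk.~3.10]{DK23} for the projective invariants and by \cite[Thm.~5.1.(1)]{GS24} for the injective ones. If $e(M)<2\min\{\mu,\type\}$, parts (2) and (3) together apply. Ulrich modules satisfy $e=\mu$, and for them maximality also follows from \cite[Thm.~3.8]{DG23} together with Remark~\ref{rmk:Ulrich-inj-cx-curv}.

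I expect the main obstacle to be (2)/(3): checking that the $e<2\mu$ criterion, and its dual via the canonical dual, really appear in or follow from \cite{DG23}, and verifying the shift identity between Bass numbers of $M$ and Betti numbers of $M^\vee$.
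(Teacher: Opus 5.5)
Your proposal is correct and has the same architecture as the paper's proof. You obtain maximality of the relevant projective or injective invariants of $M$ from the literature, then transport them to $N$ via Theorem~\ref{thm:main}.(1), also applied with $M$ and $N$ interchanged. That swap is legitimate because $N$ is CM by \cite[Cor.~15]{MS04}, and the paper tacitly does the same in (3).

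The differences are local:
\begin{itemize}
\item In (3) the paper simply cites \cite[Cor.~5.5.(6)]{DGS24}, whereas you derive the $\type$-criterion from the $\mu$-criterion through $M^\vee=\Ext^g_R(M,R)$. That derivation is sound. $M^\vee$ is CM of the same dimension, $\mu(M^\vee)=\type(M)$, and $e(M^\vee)=e(M)$ by the associativity formula together with Matlis duality over the Gorenstein rings $R_{\mathfrak p}$. Moreover $\mu_R^i(M)=\beta^R_{i-\depth M}(M^\vee)$, because $\mathbf{R}\Hom_R(k,M)\simeq\mathbf{R}\Hom_R(k\otimes^{\mathbf L}_R M^\vee,R)[g]$ and $\mathbf{R}\Hom_R(k,R)\simeq k[-\dim R]$. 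This route buys you that only the projective criterion is ever needed.
\item For Ulrich modules you invoke \cite[Thm.~3.8]{DG23} and Remark~\ref{rmk:Ulrich-inj-cx-curv} directly. The paper instead notes $e(M)=\mu(M)=\type(M)$ by \cite[Thm.~4.5]{DGS24} and falls back on (2) and (3). Either works.
\end{itemize}

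The one point to repair is your justification of the $e(M)<2\mu(M)$ criterion in (2). The statement you need is \cite[Cor.~5.5.(4)]{DGS24}, which is what the paper cites, rather than something you should expect in \cite{DG23}. Also, the last step of your sketch is not the operative mechanism. There is no reason for $k$ to split off a syzygy of $\bar M$. And $k$ being a summand of $\fm\bar M$ gives no lower bound on Betti numbers by itself; take $\bar M=\bar R$ when $\bar R$ has square-zero maximal ideal.

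What makes it work is a count. Let $\bar M=M/\underline{x}M$ and $\bar R=R/(\underline{x})$. Choose $\underline{x}$ as you describe, and additionally linearly independent modulo $\fm^2$. Then $\beta^R_n(M)=\beta^{\bar R}_n(\bar M)$, and $k$ has the same complexity and curvature over $R$ and over $\bar R$. The exact sequence $0\to\fm\bar M\to\bar M\to k^{\mu(M)}\to0$, together with subadditivity along a composition series of $\fm\bar M$ (whose length is $e(M)-\mu(M)$), gives
\[
\mu(M)\,\beta^{\bar R}_n(k)\le\beta^{\bar R}_n(\bar M)+\big(e(M)-\mu(M)\big)\,\beta^{\bar R}_{n-1}(k).
\]
When $\bar R$ is not regular, $\beta^{\bar R}_{n-1}(k)\le\beta^{\bar R}_n(k)$; the regular case is trivial. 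Hence $\beta^{\bar R}_n(\bar M)\ge\big(2\mu(M)-e(M)\big)\,\beta^{\bar R}_n(k)$ with a positive constant, which yields maximal complexity and curvature.
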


\begin{proof}
 By virtue of Theorem~\ref{thm:main}.(1), the statements in (1), (2) and (3) follow from \cite[Cor.~9]{Avr96}, \cite[Cor.~5.5.(4)]{DGS24} and \cite[Cor.~5.5.(6)]{DGS24} respectively.

(4) When $M$ is Burch, it has maximal projective (resp., injective) complexity and curvature; see \cite[Thm.~5.1.(1)]{GS24}. Hence, the result follows directly from Theorem~\ref{thm:main}.(1). If $M$ is Ulrich, then by definition, $e(M) = \mu(M)$, and hence \cite[Thm.~4.5]{DGS24} yields $e(M) = \type(M)$. In general, if $ e(M) < 2 \min\{\mu(M), \type(M)\}$, the equalities in (i) and (ii) can be obtained from (2) and (3).
\end{proof}

We finally note that in a Gorenstein local ring, an $\fm$-primary ideal which is linked to a Burch ideal also has maximal (injective) complexity and curvature.

\begin{corollary}\label{cor:Burch-ideal}
    Let $(R,\fm)$ be a Gorenstein local ring. Let $I$ and $J$ be two $\fm$-primary ideals of $R$ linked by a Gorenstein perfect ideal $\fa$. Suppose that $I$ is integrally closed, or $I$ is weakly $\fm$-full, or $I$ is Burch. Then $J$ has maximal projective $($resp., injective$)$ complexity and curvature.
\end{corollary}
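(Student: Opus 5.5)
The plan is to reduce to Corollary~\ref{cor:main}.(1) applied with the roles of $I$ and $J$ exchanged. Then it suffices to show that $R/I$ is CM and that $I$ (equivalently $R/I$) has maximal projective and injective complexity and curvature.

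First, $I$ is $\fm$-primary, so $R/I$ has finite length. Hence $R/I$ is CM (of dimension zero), and likewise $R/J$ is CM. Since linkage is symmetric, Corollary~\ref{cor:main}.(1) applied to the pair $(J,I)$ gives
\[
\cx_R(J)=\injcx_R(I), \quad \curv_R(J)=\injcurv_R(I).
\]
Applied to the pair $(I,J)$ it gives
\[
\cx_R(I)=\injcx_R(J), \quad \curv_R(I)=\injcurv_R(J).
\]
Therefore it is enough to know that $I$ has maximal projective and injective complexity and curvature. By \ref{para:max-cx-curv}, these maxima are $\cx_R(k)=\injcx_R(k)$ and $\curv_R(k)=\injcurv_R(k)$. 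Also, $I$ and $R/I$ share projective invariants, and their injective invariants agree by Remark~\ref{rmk:ses-cx}.(2), since $R$ has finite injective dimension.

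The remaining task is to check that each of the three hypotheses forces $I$ to be Burch, or at least to have maximal (injective) complexity and curvature.
\begin{enumerate}[\rm (a)]
\item If $I$ is Burch, this is \cite[Thm.~5.1.(1)]{GS24} (cf.~\ref{para:max-cx-curv-survey}).
\item If $I$ is integrally closed and $\fm$-primary, then $\depth(R/I)=0$. When $k$ is infinite and $R$ is not a field, Example~\ref{exam:Burch}.(1) says $I$ is Burch. Alternatively, I would cite \cite[Cor.~5.5.(1)]{GS24} directly for the projective side and \cite[Thm.~5.1]{GS24} for the injective side.
\item If $I$ is weakly $\fm$-full, i.e.\ $(\fm I:\fm)=I$, I would show that $I$ is Burch whenever $I\ne\fm$ and $I$ is $\fm$-primary. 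Suppose instead $\fm(I:\fm)\subseteq \fm I$. Then $(I:\fm)\subseteq(\fm I:\fm)=I$. But $I$ is $\fm$-primary and proper, so $(I:\fm)\supsetneq I$, because the socle of $R/I$ is nonzero. This is a contradiction.
\end{enumerate}

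The main obstacle is the edge cases: $R$ a field, $k$ finite, or $I=\fm$. If $I=\fm$, then $\fm$ is Burch when $\fm\ne0$, since $\fm(\fm:\fm)=\fm R=\fm\not\subseteq\fm^2$. If $R$ is a field everything is trivial. For finite $k$ in the integrally closed case, I would pass to $R[X]_{\fm R[X]}$. This faithfully flat extension preserves Betti and Bass numbers, integral closedness of $I$, the Gorenstein and linkage hypotheses (linkage via flat base change of colons), and $\fm$-primariness. So one reduces to $k$ infinite.

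Combining the reduction in the first paragraph with this case check yields $\cx_R(J)=\injcx_R(J)=\cx_R(k)$ and $\curv_R(J)=\injcurv_R(J)=\curv_R(k)$.
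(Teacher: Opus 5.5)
Your proposal is correct and takes the same route as the paper: apply Corollary~\ref{cor:main}.(1) to both pairs $(I,J)$ and $(J,I)$, which is allowed since both quotients are CM of dimension zero, and combine this with the known maximality of the projective and injective complexity and curvature of $I$. The only difference is that the paper cites \cite[Cor.~5.5.(1)]{GS24} directly for all three hypotheses, while you reduce the weakly $\fm$-full case (by your socle argument) and the integrally closed case (by Example~\ref{exam:Burch}.(1) together with the passage to $R[X]_{\fm R[X]}$) to the Burch case; this is a valid and somewhat more self-contained unpacking of that citation.
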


\begin{proof}
    Note that the ring $R/I$ is CM of dimension zero. So, the result follows from \cite[Cor.~5.5.(1)]{GS24} and Corollary~\ref{cor:main}.
\end{proof}

It follows from \cite[Thm.~4.1.(a)]{MQS23} that Ulrich property of modules is preserved under linkage by a Gorenstein ideal.

\begin{proposition}\label{prop:Ulrich-linkage}
    Let $M$ and $N$ be two $R$-modules linked by a Gorenstein ideal $\mathfrak{a}$. Then $M$ is Ulrich \iff $N$ is so.
\end{proposition}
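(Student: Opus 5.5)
By the symmetry of linkage it suffices to prove one implication: if $M$ is Ulrich, then $N$ is Ulrich. Set $\overline{R}=R/\fa$, a Gorenstein local ring. The plan is to reduce everything to $\overline{R}$ and then use a duality argument.

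\emph{Step 1: move to $\overline{R}$.} Since $\fa\subseteq\ann_R(M)\cap\ann_R(N)$, the modules $M$ and $N$ are horizontally linked $\overline{R}$-modules. Multiplicity, minimal number of generators and the Cohen--Macaulay property of a module annihilated by $\fa$ do not depend on whether we view it over $R$ or over $\overline{R}$. Here multiplicity is taken with respect to the maximal ideal, and over $\overline{R}$ we use $\fm\overline{R}$, which gives the same Hilbert--Samuel function. So we may assume $\fa=0$, i.e. $R$ is Gorenstein and $M,N$ are horizontally linked. By \cite[Cor.~15]{MS04}, $N$ is CM whenever $M$ is. We may also assume the residue field is infinite, passing to $R[X]_{\fm R[X]}$; this preserves linkage, the CM property, $e$ and $\mu$.

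\emph{Step 2: reduce to MCM.} Pick a maximal $R$-regular sequence $\underline{x}$ inside $\ann_R(M)\cap\ann_R(N)$, which is possible since both modules have the same dimension. Then $M$ and $N$ are linked by the Gorenstein perfect ideal $(\underline{x})$, via \cite[Prop.~10]{MS04} or transitivity of linkage through regular sequences. Passing to $R/(\underline{x})$ we may assume $M$ and $N$ are modules over a Gorenstein ring $S$ with $\dim M=\dim N=\dim S$, so both are MCM.

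\emph{Step 3: apply the exact sequence.} For MCM modules over a Gorenstein ring, the defining sequence $0\to M^*\to F_0^*\to\lambda(M)\to0$ from \eqref{e} (with $\overline{R}=S$) is exact, with $N\cong\lambda(M)$ and $M^*$ MCM. Since $M$ has no free summand (being horizontally linked), the map $F_0^*\to N$ is a minimal cover, so $\mu(N)=\mu(F_0)=\mu(M)$. Multiplicity is additive on short exact sequences of MCM modules, so
\[
e(N)=\mu(M)\,e(S)-e(M^*).
\]
Using $e(M^*)=e(M)$, which holds because multiplicity of MCM modules is rank-like and dualization preserves it via generic ranks after reducing modulo a minimal reduction, this gives $e(N)=\mu(M)e(S)-e(M)$.

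\emph{Step 4 (main obstacle).} The formula in Step 3 does not by itself yield $e(N)=\mu(N)$. The Ulrich condition has to be used more sharply. The route I would take is to work modulo a general minimal reduction $\underline{y}$ of $\fm$, which is $M$- and $N$-regular, and use the characterization $\fm M=(\underline{y})M$, equivalently that $M/\underline{y}M$ is a $k$-vector space. Linkage commutes with reduction modulo $S$-regular elements that are regular on the modules, so $M/\underline{y}M$ and $N/\underline{y}N$ are horizontally linked over the Artinian Gorenstein ring $S/\underline{y}S$, possibly up to free summands. For Artinian Gorenstein rings, $\lambda(M)=\Omega\Tr M$ is identified with $\Omega(M^\vee)$ via Matlis duality, and $M^\vee$ is again a $k$-vector space. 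One then has to show that the syzygy of a $k$-vector space, as a module linked to it, is killed by $\fm$. This last point is the delicate step and is the reason the paper defers to \cite[Thm.~4.1.(a)]{MQS23}; I would cite that result rather than redo the Artinian computation.
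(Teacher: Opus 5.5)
\textbf{There is a genuine gap: Step 4 cannot be completed, and the statement itself is false as written.} Your Step 3 already shows this. Write $S=R/\fa$. If $M$ is Ulrich, then $M$ is a nonzero, stable MCM $S$-module, so $\mu(N)=\mu(M)$. Also $e(M^*)=e(M)$: your rank argument is vague, but the equality holds by the associativity formula, since $\Hom(-,S_{\mathfrak p})$ preserves length over the Artinian Gorenstein rings $S_{\mathfrak p}$ with $\mathfrak p$ minimal. Your formula therefore reads
\[
e(N)=\mu(M)\,e(S)-e(M)=\bigl(e(S)-1\bigr)\mu(N),
\]
so $N$ is Ulrich if and only if $e(S)=2$.

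The Artinian claim you isolate in Step 4 fails for the same reason. Over an Artinian Gorenstein ring $S$ one has $\lambda_S(k)\cong S/(0:_S\fm)$. This is the cosyzygy of $k^\vee\cong k$, not its syzygy, and it is killed by $\fm$ only when $\fm^2=0$.

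Here is a concrete counterexample. Take $R=k[[x]]$, $\fa=(x^3)$, $I=(x)$ and $J=(x^2)$. Then $\fa\subseteq I\cap J$, $(\fa:_RI)=J$ and $(\fa:_RJ)=I$. Directly over $S=k[x]/(x^3)$, one gets $\lambda_S(k)=xS\cong S/(x^2)$ and $\lambda_S(S/(x^2))=x^2S\cong k$. So $R/I$ and $R/J$ are linked by the Gorenstein (even complete intersection) ideal $\fa$. Now $R/I\cong k$ is Ulrich, but $e(R/J)=2\neq 1=\mu(R/J)$.

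Deferring the last step to \cite[Thm.~4.1.(a)]{MQS23} cannot rescue the argument. The paper's own proof does exactly this, after the same reduction to a Gorenstein, non-regular $R/\fa$. The example above satisfies those conditions, so the hypotheses of the cited theorem cannot be met merely by $R/\fa$ being Gorenstein and non-regular. In effect, some extra assumption forcing $e(R/\fa)=2$ is needed.

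Your argument does prove a corrected statement, without any appeal to \cite{MQS23}: if $e(R/\fa)=2$, then $M$ is Ulrich if and only if $N$ is. Indeed, in that case $N$ is a nonzero MCM $R/\fa$-module by \cite[Lem.~14, Cor.~15]{MS04}, $\mu(N)=\mu(M)$, and $e(N)=2\mu(M)-e(M)=\mu(M)$.

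Incidentally, your Step 2 is vacuous. A horizontally linked module is a first syzygy, so its annihilator contains no regular element. MCM-ness follows directly from \cite[Lem.~14, Cor.~15]{MS04}.
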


\begin{proof}
    Since $\mathfrak{a}M=0$, observe that $M$ is Ulrich as an $R$-module \iff $M$ is Ulrich as an $R/\mathfrak{a}$-module because depth, dimension, multiplicity and minimal number of generators of $M$ are preserved while passing from $R$ to $R/\mathfrak{a}$. Since $R/\mathfrak{a}$ is Gorenstein, replacing $R/\mathfrak{a}$ by $R$, we may assume that $R$ is Gorenstein, and the $R$-modules $M$ and $N$ are horizontally linked. Then, by \cite[p.~616, Lem.~14 and Cor.~15]{MS04}, $M$ is CM \iff $M$ is MCM \iff $N$ is MCM. Therefore, if $R$ is regular and $M$ is Ulrich, then $M$ is free, which implies that $N=0$, and hence $M$ must be zero, a contradiction. So we may assume that $R$ is not regular, i.e., $\fm$ is not a parameter ideal. Hence, the statement follows from \cite[Thm.~4.1.(a)]{MQS23}.
\end{proof}

Proposition~\ref{prop:Ulrich-linkage} is not true in general when $\mathfrak{a}$ is not a Gorenstein ideal.

\begin{example}\label{exam:Ulrich-is-not-preserved}
Let $R=k[[x,y]]$ over a field $k$. Set $\mathfrak{a}:=(x^2,xy)$, $M:=I/\mathfrak{a}$ and $N:=\fm/\mathfrak{a}$, where $I:=(x)$ and $\fm:=(x,y)$. Since $(\mathfrak{a} :_R I) = \fm$ and $(\mathfrak{a} :_R \fm) = I$, the modules $M$ and $N$ are linked by $\mathfrak{a}$. Here $\mathfrak{a}$ is not a Gorenstein ideal. Note that $M\cong k$ is an Ulrich $R$-module, while $N$ is not Ulrich because $e(N)=1\neq 2=\mu(N)$. In fact, $N$ is not a CM $R$-module as $\depth(N) = 0 \neq 1 = \dim(N)$.
\end{example}

A modification of Example~\ref{exam:Ulrich-is-not-preserved} ensures that an ideal (horizontally) linked to a Burch ideal is not necessarily Burch. The base ring in this example is not CM.

\begin{example}\label{exam:Burch-is-not-preserved}
Let $R=k[x,y]/(x^2,xy)$ over a field $k$. Then $R$ is not CM. Set $I:=(x)$ and $\fm:=(x,y)$. In view of Example~\ref{exam:Ulrich-is-not-preserved}, the ideal $I$ is (horizontally) linked to the maximal ideal $\fm$ of $R$. Here, $\fm$ is a Burch ideal. However, $I$ is not a Burch ideal because $\fm(I:_R \fm) = \fm I$.  
\end{example}

The following example ensures that even in a complete intersection local ring, an ideal $I$ linked to a Burch (resp., weakly $\fm$-full, or $\fm$-full) ideal $J$ by a Gorenstein ideal $\fa$ is not necessarily Burch (resp., weakly $\fm$-full, or $\fm$-full). In this example, $\fa$ is not perfect. It also shows that assertions (1) and (2) in Theorem~\ref{thm:main} do not hold in general if the ideal $\fa$ is not perfect.

\begin{example}\label{exam:Burch-m-full-is-not-preserved}
       Let $R=k[[x,y,z]]/(x^2,y^2)$ over a field $k$. Then $(R,\fm)$ is a local complete intersection ring of codimension $2$, where $\fm = (x,y,z)$. Set $\fa:=(xz, yz, z^2-xy)$, $I:=(x,z)$ and $J:=(x,yz,z^2)$. Then the following hold.
       \begin{enumerate}[(1)]
        \item 
         The ideals $I$ and $J$ are linked by $\fa$, where $\fa$ is Gorenstein, but $\fa$ is not perfect.
        \item 
        The ideal $J$ is Burch, while $I$ is not Burch. 
        \item 
        Although $J$ is $\fm$-full, $I$ is not even weakly $\fm$-full.
        \item 
        \begin{enumerate}[(i)]
            \item $\injcx_R(R/I)=\cx_R(R/I)=\cx_R(I)=\injcx_R(I)=1$.
            \item $\injcx_R(R/J) = \cx_R(R/J) = \cx_R(J) = \injcx_R(J)= 2$.
        \end{enumerate}
    \end{enumerate}
 \end{example}

\begin{proof}
    (1) Observe that $\fa \subseteq I \cap J$, $I= (\fa:_R J) $ and $J= (\fa:_R I)$. So, the ideals $I$ and $J$ are linked by $\fa$. Note that $R/\fa$ is Artinian, and the socle of $R/\fa$ is generated by $xy$. It follows that the ideal $\fa$ is Gorenstein. Taking the minimal free presentation $R^3\xlongrightarrow{[xz~yz~z^2-xy]}R\rightarrow R/\fa \rightarrow 0$ of $R/\fa$, since the map $[xz~yz~z^2-xy]$ is not injective, it follows that $\pd_R(R/\fa)\ge 2$. Therefore, since $\depth(R)=1$, by the Auslander-Buchsbaum formula, $\pd_R(R/\fa)=\infty$. Thus, $\fa$ is not perfect.

    (2) Since $\fm(I:_R \fm) = \fm^2 = (xy,xz,yz,z^2)= \fm I$, the ideal $I$ is not Burch. However, $J$ is a Burch ideal because $z^2\in \fm(J:_R\fm)$, but $z^2\notin \fm J$.

    (3) Note that $\fm J = (xy, xz, yz^2, z^3)$ and $(\fm J:_R z)= J$. Thus $J$ is $\fm$-full. However, $I$ is not even weakly $\fm$-full because $(\fm I:_R \fm)=\fm\neq I$.

    (4) Since $R$ is complete intersection, for any $R$-module $M$, one has $\cx_R(M)=\injcx_R(M)$, cf.~\cite[Thm.~II.(1)]{AvBu00}. So, it is enough to compute only $\cx_R(R/I)$ and $\cx_R(J)$. Since $Rx=\ann_R(x)$, one has $\beta_n^R(R/(x))=1$ for all $n\ge 0$. Consider the minimal free resolution $\mathbb{F}_x : ~ \cdots \to R \xrightarrow{x} R \xrightarrow{x} R \to 0 $ of $R/(x)$, and $\mathbb{F}_z: 0\rightarrow R \xrightarrow{z} R \rightarrow 0$ of $R/(z)$ as $z$ is $R$-regular. Since $R/(x,z) \cong R/(x) \otimes_R R/(z)$ and $\Tor_n^R(R/(x), R/(z))=0$ for all $n\ge 1$, the tensor product $\mathbb{F}_x\otimes_R\mathbb{F}_z$ yields a minimal R-free resolution of $R/(x,z)$. It follows that $\beta_n^R(R/(x,z))=2$ for all $n\ge 1$, and hence $\cx_R(R/I)=1$. Since $J$ is a Burch ideal, in view of \ref{para:max-cx-curv-survey}, $\cx_R(J)=\cx_R(k)=2$, where the second equality follows from \cite[Thm.~6]{Tat57} as $R$ is a complete intersection local ring of codimension $2$.
\end{proof}


The properties `integrally closed' and `$\fm$-full' are not always preserved under linkage of ideals by a regular sequence even in a regular local ring.

\begin{example}\label{exam:int-closed-is-not-preserved}
    Let $R=k[[x,y]]$ be a formal power series ring in two variables $x,y$ over a field $k$. Then $R$ is regular local. Set $\fm:=(x,y)$. Fix integers $m,n\ge 2$. Set $\fa := (x^m,y^n)$ and $I:=(x^m,x^{m-1}y^{n-1},y^n)$. Then, $I$ is linked to $\fm$ by the $R$-regular sequence $x^m,y^n$. Moreover, the following hold.
    \begin{enumerate}[(1)]
        \item Though $\fm$ is integrally closed, but $I$ is integrally closed only when $m+n\le 5$.        
        \item Both $\fm$ and $I$ are weakly $\fm$-full and Burch.
        \item  Though $\fm$ is $\fm$-full, but $I$ is $\fm$-full \iff $\min\{m,n\} = 2$.
    \end{enumerate}
\end{example}

\begin{proof}
    Since $(\fa :_R \fm) = I$ and $(\fa :_R I) = \fm$, the ideal $I$ is linked to $\fm$ by $\fa$.

    (1) Denote the integral closure of $I$ by $\overline{I}$. To compute $\overline{I}$, use \cite[1.4.6]{HS06}. Without loss of generality, assume that $m\ge n$. If $n\ge 3$, then $x^{m-1}y\in \overline{I}$, but $x^{m-1}y\notin I$. If $m\ge 4$ and $n=2$, then $x^{m-2}y\in \overline{I}$, but $x^{m-2}y\notin I$. It remains to consider the cases when $n=2$ and $m=2$ or $3$. In each of these two cases, $\overline{I}=I$. Thus, $I$ is integrally closed \iff $m+n \le 5$.
    
    (2)
    Observe that $x^{m-1}y^{n-1}\in \fm(I:_R\fm)$, but $x^{m-1}y^{n-1}\notin \fm I$. So $I$ is a Burch ideal. Since $\fm I=(x^{m+1},x^{m}y,xy^{n},y^{n+1})$, it follows that
    \begin{align*}
        (\fm I:_R\fm) & = (\fm I:_R x) \cap (\fm I:_R y) \\
        & = (x^{m},x^{m-1}y,y^{n})\cap (x^{m},xy^{n-1},y^{n}) \\
        & = (x^m,x^{m-1}y^{n-1},y^n) = I.
    \end{align*}
    Thus $I$ is weakly $\fm$-full.

   (3) From the definition, it is clear that $\fm$ is $\fm$-full. Note that $I$ is an $\fm$-primary ideal, and $\mu(I)=3$. Set $r:=\min\{m,n\}$. Then $I\subseteq\fm^r$, but $I\nsubseteq\fm^{r+1}$. Hence, by \cite[Thm.~4]{Wat87}, $I$ is $\fm$-full \iff $r=2$.
\end{proof}


We close this article by presenting the following natural questions.

\begin{question}\label{ques-1}
    Let $R$ be a Gorenstein local ring. Let $I$ and $J$ be two proper ideals of $R$ linked by a Gorenstein perfect ideal such that $R/I$ and $R/J$ are CM.\\
    (1)~Is $\cx_R(I) = \cx_R(J)$? \; (2)~Is $\curv_R(I) = \curv_R(J)$?\\
    (3)~If $I$ is Burch (resp., weakly $\fm$-full), then is $J$ Burch (resp., weakly $\fm$-full)?
    \mycomment{
    \begin{enumerate}[(1)]
        \item Is $\cx_R(I) = \cx_R(J)$?
        \item If $I$ is Burch (resp., weakly $\fm$-full, or $\fm$-full), then is $J$ the same?
    \end{enumerate}
    }
\end{question}

\begin{question}\label{ques-2}
    Let $R$ be a Gorenstein local ring. Let $M$ and $N$ be two CM $R$-modules linked by a Gorenstein perfect ideal.\\
    (1)~Is $\cx_R(M) = \cx_R(N)$? \; (2)~Is $\curv_R(M) = \curv_R(N)$?
\end{question}

In view of Theorem~\ref{thm:main}.(1), affirmative answers to Question~\ref{ques-2} ensure that both injective complexity and curvature of CM modules are also preserved under linkage by a Gorenstein perfect ideal over a Gorenstein local ring. By Theorem~\ref{thm:main}.(2), if $R$ is complete intersection, then Question~\ref{ques-2} has positive answers.

\mycomment{
\begin{question}\label{question}
    Let $R$ be a Gorenstein local ring.
    \begin{enumerate}[\rm (1)]
        \item Let $M$ and $N$ be two $R$-modules linked by a Gorenstein ideal $\mathfrak{a}$. Suppose that $M$ is Burch. Then, is $N$ Burch?
        \item Let $I$ and $J$ be two proper ideals of $R$ linked by a Gorenstein ideal $\mathfrak{a}$. Suppose that $I$ is a Burch ideal. Then, is $J$ Burch?
    \end{enumerate}
\end{question}
}

\subsection*{Acknowledgments}
This work forms part of the first-named author's Ph.D.~thesis. He gratefully acknowledges the financial support provided by the UGC, Ministry of Education, Government of India, during his Ph.D.~studies.

\bibliographystyle{plain}
\bibliography{mainbib}

\end{document}